\documentclass[11pt,a4paper]{article}

\usepackage[margin=27mm]{geometry}
\usepackage{amsmath,amssymb,amsthm,mathtools,mathrsfs}
\usepackage{microtype}
\usepackage{needspace}
\usepackage[hidelinks,unicode]{hyperref}
\usepackage{aliascnt}
\usepackage[nameinlink,noabbrev]{cleveref}
\usepackage{indentfirst}
\usepackage{enumitem}

\hypersetup{
  pdftitle={Almost Homogeneity Characterizes Almost Bi-Lipschitz Euclidean Embeddability},
  pdfauthor={Ryo Fukano}
}

\allowdisplaybreaks[2]

\theoremstyle{plain}
\newtheorem{theorem}{Theorem}[section]

\newaliascnt{lemma}{theorem}
\newtheorem{lemma}[lemma]{Lemma}
\aliascntresetthe{lemma}

\newaliascnt{proposition}{theorem}
\newtheorem{proposition}[proposition]{Proposition}
\aliascntresetthe{proposition}

\newaliascnt{corollary}{theorem}
\newtheorem{corollary}[corollary]{Corollary}
\aliascntresetthe{corollary}

\theoremstyle{definition}
\newaliascnt{definition}{theorem}
\newtheorem{definition}[definition]{Definition}
\aliascntresetthe{definition}

\theoremstyle{remark}
\newaliascnt{remark}{theorem}
\newtheorem{remark}[remark]{Remark}
\aliascntresetthe{remark}
\crefname{theorem}{theorem}{theorems}
\crefname{lemma}{lemma}{lemmas}
\crefname{proposition}{proposition}{propositions}
\crefname{corollary}{corollary}{corollaries}
\crefname{definition}{definition}{definitions}
\crefname{remark}{remark}{remarks}
\crefname{section}{section}{sections}
\crefname{subsection}{subsection}{subsections}
\crefname{equation}{equation}{equations}
\Crefname{theorem}{Theorem}{Theorems}
\Crefname{lemma}{Lemma}{Lemmas}
\Crefname{proposition}{Proposition}{Propositions}
\Crefname{corollary}{Corollary}{Corollaries}
\Crefname{definition}{Definition}{Definitions}
\Crefname{remark}{Remark}{Remarks}
\Crefname{section}{Section}{Sections}
\Crefname{subsection}{Subsection}{Subsections}
\Crefname{equation}{Equation}{Equations}

\newcommand{\R}{\mathbb R}
\newcommand{\N}{\mathbb N}
\newcommand{\Z}{\mathbb Z}
\newcommand{\norm}[1]{\lVert#1\rVert}
\newcommand{\card}[1]{\lvert#1\rvert}
\newcommand{\Lip}{\operatorname{Lip}}
\newcommand{\diam}{\operatorname{diam}}
\newcommand{\cover}{\mathcal N}
\newcommand{\Ld}{\Lambda}

\title{Almost Homogeneity Characterizes Almost Bi-Lipschitz Euclidean Embeddability}
\author{Ryo Fukano}
\date{October 30, 2026}

\begin{document}
\maketitle
\begin{NoHyper}
\begingroup
\renewcommand{\thefootnote}{}
\footnotetext{\textit{2020 Mathematics Subject Classification.} Primary 30L05; Secondary 51F30.\\
\textit{Key words and phrases.} Almost homogeneous metric space, almost bi-Lipschitz embedding, doubling metric space, Euclidean embedding, difference set.}
\endgroup
\end{NoHyper}

\begin{abstract}
A metric space admits an almost bi-Lipschitz embedding into a Euclidean space if and only if it is almost homogeneous.
 We prove the sufficient direction, complementing the known necessary condition. More quantitatively, we bound the target dimension and the embedding constants for every logarithmic exponent $\gamma>1$. 
The bounds depend only on the covering parameters in the almost homogeneity condition. 
In particular every doubling metric space admits such an embedding.
This answers a question of Margaris and Robinson and the almost bi-Lipschitz part of a question of Olson and Robinson. 
As a consequence, the difference set of the image is doubling, which answers the difference-set questions of Olson and Robinson and of Margaris.
\end{abstract}

\section{Introduction}\label{sec:intro}

The main result of this paper is to show that almost bi-Lipschitz embeddability into Euclidean spaces is equivalent to almost homogeneity.
An almost bi-Lipschitz embedding is characterized by an upper Lipschitz bound and a lower bound that permits a logarithmic loss.

Assouad's theorem
\cite{Assouad1983} gives a Euclidean bi-Lipschitz embedding after
snowflaking. More precisely let $(X,d)$ be doubling and let
$0<\theta<1$. Then $(X,d^\theta)$ admits a bi-Lipschitz embedding into a
Euclidean space. However the doubling condition alone does not imply
the existence of a bi-Lipschitz embedding for the original metric $d$
\cite{Laakso2002,Semmes1996}.

Olson and Robinson \cite[Theorem~2.3]{OlsonRobinson2010} constructed
almost bi-Lipschitz embeddings of almost homogeneous metric spaces into
Hilbert space. For linear embeddings into Euclidean space, they assumed
almost homogeneity of the difference set $A-A:=\{x-y:x,y\in A\}$ of a
compact subset $A$ of a Hilbert space
\cite[Theorem~5.6]{OlsonRobinson2010}. Robinson \cite{Robinson2014} and
Margaris \cite{Margaris2021} subsequently improved the logarithmic
exponents and covering conditions in this direction. These results impose covering conditions on the difference set in
addition to conditions on the set itself.

We show that almost homogeneity of the metric space itself is sufficient
for an almost bi-Lipschitz embedding into Euclidean space. For every
$\gamma>1$ we bound the target dimension in terms of the covering
parameters. This answers \cite[Section~3, question~(2)]{MargarisRobinson2020}
and the almost bi-Lipschitz part of
\cite[Section~9, Question~(2)]{OlsonRobinson2010}.

We first prove the result for finite subspaces by a scale-by-scale construction using Gumbel variables and the Lov\'asz local lemma. 
Uniform estimates then yield the result for general metric spaces.

We now state the main results. The relevant definitions appear in
\cref{def:ah,def:doubling,def:abl}.

\Needspace{7\baselineskip}
\begin{theorem}\label{thm:char}
For a metric space $X$, the following conditions are equivalent.
\begin{enumerate}[label=\textup{(\roman*)},leftmargin=2.2em]
\item $X$ is almost homogeneous.
\item There exist a positive integer $N$ and a finite $\gamma\geq0$ such
that $X$ admits a $\gamma$-almost bi-Lipschitz embedding into $\R^N$.
\end{enumerate}
\end{theorem}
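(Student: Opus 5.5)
The plan is to prove the two implications separately. For (ii)$\Rightarrow$(i) we use the standard necessary condition from the literature (Olson--Robinson). For (i)$\Rightarrow$(ii) we use the main quantitative embedding theorem of the paper, which we build from finite subsets up.

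\emph{(ii)$\Rightarrow$(i).} Let $f\colon X\to\R^N$ be $\gamma$-almost bi-Lipschitz. Then $f$ is $L$-Lipschitz, and for $d(x,y)$ small we have $\card{f(x)-f(y)}\ge c\,d(x,y)/\log(1/d(x,y))^{\gamma}$. Fix a ball $B(x,r)$ with $r<1$ and a scale $0<\rho<r$.
\begin{enumerate}[label=\textup{(\alph*)},leftmargin=2.2em]
\item The image $f(B(x,r))$ lies in a Euclidean ball of radius $Lr$.
\item That Euclidean ball is covered by $\lesssim (Lr/\delta)^N$ Euclidean balls of radius $\delta$, where $\delta\approx c\rho/\log(1/\rho)^{\gamma}$.
\item Pulling back, each such Euclidean ball meets $f(X)$ in the image of a set of diameter at most $\rho$, by the lower bound.
\end{enumerate}
This gives $\cover(B(x,r),\rho)\le C (r/\rho)^{N}\log(1/\rho)^{\gamma N}$, which is exactly the almost homogeneity estimate with exponent $\beta=\gamma N$ (\cref{def:ah}). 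Two boundary cases need separate handling. Scales $r\ge1$ are dealt with using the Lipschitz bound, or are excluded by the definition's restriction to small scales. If $\gamma=0$, the same argument gives doubling with no logarithmic factor.

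\emph{(i)$\Rightarrow$(ii).} This is the substantive direction, and I would prove it first for finite subsets.
\begin{enumerate}[label=\textup{(\alph*)},leftmargin=2.2em]
\item \emph{Scale-by-scale construction.} Fix a finite $Y\subset X$ and dyadic scales $2^{-k}$. At each scale take a $2^{-k}$-net of $Y$. Build a Lipschitz map $\phi_k$ into a bounded-dimensional block using bump functions $\min(1,d(\cdot,z)/2^{-k})$ weighted by random signs, or by Gumbel-type random variables attached to the net points. Almost homogeneity bounds how many net points lie in a ball of radius $C2^{-k}$, namely $\lesssim k^{\beta}$, and this controls the dimension needed at scale $k$.
\item \emph{Reusing coordinates.} Blocks for scales $k$ are placed into $N$ fixed coordinates, reused cyclically or summed with weights $2^{-k}k^{-\gamma'}$ so that the total map stays Lipschitz. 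Summability of these weights is where $\gamma>1$ enters.
\item \emph{Lower bound via the local lemma.} A pair $x,y$ with $d(x,y)\approx 2^{-k}$ is \emph{bad} if its scale-$k$ contribution is cancelled by the other scales. Each bad event has probability exponentially small in $N$. It depends only on random variables near $x$ and $y$, so the number of dependent events is polynomial in $k^{\beta}$ and in the covering constants. The Lov\'asz local lemma then gives a realization with no bad events, provided $N$ is large in terms of $\beta$, the covering constant and $\gamma$.
\item \emph{Extension to $X$.} All constants are uniform in $Y$. A compactness argument (Arzel\`a--Ascoli on a countable dense subset, then extension by continuity) passes to $X$, or to its completion when $X$ is separable. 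Non-separable $X$ cannot occur, since almost homogeneity forces total boundedness of balls.
\end{enumerate}

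I expect the main obstacle to be step (c). The difficulty is making the cancellation probability small uniformly over infinitely many scales, since neighbouring scales contribute comparable magnitudes. I would first try adding a scale-dependent gap, taking weights $k^{-\gamma}$ and letting the local lemma handle only a window of $O(\log k)$ nearby scales, with the far scales absorbed deterministically. This is where the logarithmic loss $\log(1/d)^{\gamma}$ is spent.
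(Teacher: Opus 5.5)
Your (ii)$\Rightarrow$(i) argument is the paper's packing argument in covering form, and it is fine. With the paper's $\Ld$ no small-scale restriction is needed. Pulling back does need the comparison $t/\Ld(t)^\gamma\ge\kappa_\gamma r/\Ld(r)^\gamma$ for $t\ge r$ from \cref{lem:gauge}, because $t\mapsto t/\Ld(t)^\gamma$ is not monotone.

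The genuine gap is step (c) of (i)$\Rightarrow$(ii), which is the obstacle you flag but leave open. As you formulate it, the bad event for a pair at distance about $2^{-k}$ involves the whole sum. It therefore involves the random variables of every coarser scale $j<k$ near $x,y$. These cannot be absorbed deterministically: their contribution to $f(x)-f(y)$ can be of order $2^{-k}\sum_{j<k}j^{-\gamma'}$, far above the scale-$k$ amplitude $2^{-k}k^{-\gamma'}$. Each such variable is also shared by the bad events of a number of pairs at each finer scale $k$ that can grow like $2^{(k-j)s}$. So one global application of the local lemma has degree growing exponentially in the number of scales, while the failure probabilities decay only polynomially in $k$. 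Your count, polynomial in $k$, only sees same-scale neighbours.

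The missing idea is sequential conditioning. You choose the scales from coarse to fine, and you prove an anti-concentration bound $\mathbb P\{\norm{(G+H)(u)-(G+H)(v)}_\infty<4e\}\le(8e/U)^N$ valid for \emph{every fixed} coarser map $G$ (\cref{lem:correction}(ii)). The local lemma is then applied one scale at a time, with only same-scale dependencies. This requires random variables with bounded density. With random signs, the probability of near-cancellation against an unlucky fixed $G$ is bounded below independently of $k$, so it cannot beat a degree that grows with $k$ at fixed $N$. The paper uses a maximum of Gumbel-perturbed cones over net points. Max-stability gives the bounded density, and the maximum, unlike a sum of bumps, keeps the Lipschitz constant free of the overlap factor.

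Once the order is coarse to fine, the finer scales are still unchosen when scale $k$ is fixed, so they must be absorbed deterministically. Putting a window of finer scales inside the local lemma is incompatible with this. With dyadic scales the finer tail $\sum_{j>k}2^{-j}j^{-\gamma'}$ is comparable to the scale-$k$ amplitude. It is therefore larger than any separation $e_k\ll U_k$ that can be forced with small failure probability.

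Your ``scale-dependent gap'' is the right instinct, and the paper implements it with super-geometric scales. It takes $r_{n+1}=r_nt_n^{-a}$ with $t_n=T+|n|$ and $a=(\gamma-1)/2$, amplitudes $U_n=r_nt_n^{-(1+2\varepsilon)}$, and targets $e_n=r_nt_n^{-(1+a)}$. The whole finer tail is then at most $e_n$, a gain of $t_n^{-\varepsilon}$ (\cref{lem:uniform}(ii)). The price is that the single map $H_n$ must separate every pair of an $\eta_n$-net with $r_n/2\le d(u,v)\le 2r_{n-1}$. This range spans about $a\log_2t_n$ dyadic scales. Balancing the resulting dependency degree $\lesssim t_n^{2q}(r_nr_{n-1}/\eta_n^2)^s$ against the failure probability $8^Nt_n^{-(a-2\varepsilon)N}$ is what produces $N>3s+4(s+h)/(\gamma-1)$. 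So $s$ enters the dimension count, not only the logarithmic exponents.
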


The implication \textup{(ii)} $\Rightarrow$ \textup{(i)} is known. See
\cite[Section~3]{OlsonRobinson2010}. We prove the converse implication
\textup{(i)} $\Rightarrow$ \textup{(ii)} by the next theorem.

\Needspace{9\baselineskip}
\begin{theorem}\label{thm:quant}
Suppose that $X$ is $(M,s,\alpha,\beta)$-almost homogeneous. Put
$h:=\alpha+\beta$. For every $\gamma$ and every positive integer $N$
satisfying
\begin{equation}\label{eq:quantcond}
 \gamma>1,\qquad N>3s+\frac{4(s+h)}{\gamma-1},
\end{equation}
$X$ admits a $\gamma$-almost bi-Lipschitz embedding into $\R^N$.
The embedding constants can be chosen to depend only on
$M,s,\alpha,\beta,\gamma,N$.
\end{theorem}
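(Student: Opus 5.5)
We first reduce to finite subsets, build embeddings of finite sets scale by scale, and then pass to the limit. Throughout we use the definitions in \cref{def:ah,def:abl}. Almost homogeneity says that every ball of radius $r$, with $r$ below a fixed scale, is covered by at most $M(r/\rho)^s\log(1/r)^{\alpha}\log(1/\rho)^{\beta}$ balls of radius $\rho$, roughly speaking. A $\gamma$-almost bi-Lipschitz map $f$ satisfies
\[
 \frac{d(x,y)}{L\,\log(1/d(x,y))^{\gamma}}\le|f(x)-f(y)|\le L\,d(x,y)
\]
for small distances. Both sides of this inequality are stable under pointwise limits of uniformly Lipschitz maps with fixed $N$, by Arzel\`a--Ascoli on the separable pieces, or by ultralimits or compactness on each finite subset. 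So it suffices to show the following: every finite $F\subset X$ admits such an embedding into $\R^N$ with $L$ depending only on $M,s,\alpha,\beta,\gamma,N$. We also normalize the diameter, or treat large distances separately by adding one bounded Lipschitz coordinate. After that we work only at dyadic scales $2^{-k}$.

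For a finite $F$ we set $f=\sum_k 2^{-k}k^{-\gamma'}\phi_k$ with $1<\gamma'<\gamma$. Here $\phi_k\colon F\to\R^N$ is a $2^{k}$-Lipschitz map built from a $2^{-k}$-net $Z_k$. Concretely, $\phi_k(x)=\sum_{z\in Z_k}\psi_{z}(x)\,g_z$, where the $\psi_z$ are bump functions of width about $2^{-k}$ and the $g_z\in\R^N$ are random vectors, for example with Gumbel-type or Gaussian coordinates conditioned to be bounded. Almost homogeneity bounds the number of net points near $x$ by $M\cdot O(k^{\alpha+\beta})$. The upper Lipschitz bound then holds because $\sum_k k^{-\gamma'}\cdot(\text{local overlap})$ must be summable after normalization. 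This is exactly where the logarithmic factors $k^{h}$ have to be absorbed, so the weights are really chosen as $2^{-k}k^{-c}$ with $c$ tied to $h$ and $\gamma$.

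For the lower bound, fix a pair $x,y$ with $d(x,y)\approx 2^{-k}$. We want $|f(x)-f(y)|\gtrsim 2^{-k}k^{-\gamma}$. The scale-$k$ term contributes a random vector $2^{-k}k^{-c}(\phi_k(x)-\phi_k(y))$, which is non-degenerate in $\R^N$. The finer scales $j>k$ contribute a tail of size about $\sum_{j>k}2^{-j}j^{-c}\lesssim 2^{-k}k^{-c}$, and this tail is harmless once $\phi_k$ is separated by a constant. The coarser scales $j<k$ contribute a vector $v$ determined by the random choices at those scales. The bad event $B_{x,y}$ is that $\phi_k(x)-\phi_k(y)$ lies within distance $\varepsilon_k$ of $-v$ scaled. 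Its probability is at most about $\varepsilon_k^{N}$ by anti-concentration of the random $g_z$ in $N$ dimensions. We then take $\varepsilon_k = k^{-(\gamma-c)}$, which realizes the logarithmic loss, and apply the Lov\'asz local lemma to the events $B_{x,y}$ taken over net pairs at all scales. The dependency degree of a scale-$k$ event on events at scale $j$ is the number of net pairs within distance about $2^{-\min(j,k)}$. By almost homogeneity this is at most $M\,2^{s|j-k|}\,\mathrm{poly}(j,k)^{h}$, and it is roughly squared because events involve pairs, which explains the $3s$. The local lemma condition then reduces to a convergent series. It converges when $N(\gamma-1)$ beats $3s(\gamma-1)+4(s+h)$, which is precisely \eqref{eq:quantcond}.

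\textbf{The main obstacle} is this local-lemma bookkeeping across scales. The difficulty is that $B_{x,y}$ depends on the coarse scales $j<k$, so events at different scales are not independent. The remedy I would try first is to process the scales sequentially. At scale $k$ we condition on the already fixed coarser choices and apply the local lemma only to scale-$k$ events, whose dependencies are local at that scale. Because the bound $\Pr(B_{x,y})\le\varepsilon_k^N$ holds conditionally on any coarse vector $v$, this works. It then remains to check that the neighbor count $M^{O(1)}2^{O(s)}k^{O(h)}$ times $\varepsilon_k^N$ stays below $1/e$, with enough room for the $k$-dependent losses. Finally we pass from net pairs to arbitrary pairs using the Lipschitz bound, and then from finite $F$ to all of $X$ using the uniform constants.
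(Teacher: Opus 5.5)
Your coarse-to-fine use of the local lemma, conditioning on the already chosen coarser maps, is what the paper does, and so is the passage from finite subsets to $X$. The proof breaks in the lower bound, at the step where you dismiss the finer-scale tail.

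With dyadic scales, the scale-$k$ step guarantees a separation of only $2^{-k}k^{-c}\varepsilon_k=2^{-k}k^{-\gamma}$. The scale-$(k+1)$ term alone can move $f(x)-f(y)$ by order $2^{-k}k^{-c}\sup\norm{\phi_{k+1}}$, which is larger by the unbounded factor $k^{\gamma-c}$. The finer maps are chosen after scale $k$ and are controlled only in sup norm, so nothing stops them from cancelling what scale $k$ achieved. Your remark that the tail is harmless ``once $\phi_k$ is separated by a constant'' contradicts your choice $\varepsilon_k\to0$. If you instead demand separation by a fixed fraction of the amplitude, each bad event has probability bounded below, while the dependency degree grows like a power of $k$ once $h>0$; the local lemma then fails at large $k$.

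The missing idea is super-geometric spacing of scales. The paper sets $r_{n+1}=r_nt_n^{-a}$ with $t_n=T+|n|$ and $a=(\gamma-1)/2$, amplitude $U_n=r_nt_n^{-(1+2\varepsilon)}$, and target $e_n=r_nt_n^{-(1+a)}$. Then:
\begin{enumerate}[label=\textup{(\arabic*)},leftmargin=2.2em]
\item The whole tail is $O(r_{n+1}t_{n+1}^{-(1+\varepsilon)})=O(t_n^{-\varepsilon}e_n)$, which is below $e_n$ once $T$ is large.
\item The ratio $e_n/U_n=t_n^{-(a-2\varepsilon)}$ still decays polynomially, so it can beat the degree.
\item The same spacing gives the loss $t_n^{-(1+2a)}=t_n^{-\gamma}$ in \cref{lem:scalegauge}.
\item It also gives the $3s$: the degree is $O(t_n^{2q+(2+3a)s})$ because $(r_nr_{n-1}/\eta_n^2)^s\approx t_n^{(2+3a)s}$. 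Comparing this with $t_n^{-(a-2\varepsilon)N}$ yields \eqref{eq:quantcond}.
\end{enumerate}
Your claim that the series condition is ``precisely'' \eqref{eq:quantcond} is not derived, and a dyadic scheme does not produce it.

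There is a second, independent problem with a linear bump sum $\phi_k=\sum_z\psi_zg_z$. By your own accounting its Lipschitz constant carries the overlap $m_k\approx k^h$. Absorbing this into the weight forces $c>1+h$ for summability. Since the scale-$k$ contribution is at most of order $2^{-k}k^{-c}$, you could only reach $\gamma>1+h$, not every $\gamma>1$. Anti-concentration degrades as well: a weighted sum $\sum_z\psi_z(x)g_{z,i}$ with weights summing to one can have $\sum_z\psi_z(x)^2$ as small as $1/m_k$, so its density is not bounded uniformly in $m_k$.

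The paper's map $H_i(x)=U\max_z\{\xi_{z,i}-(16A_n/r)d(x,z)\}$ removes both losses:
\begin{enumerate}[label=\textup{(\arabic*)},leftmargin=2.2em]
\item A maximum of $(16A_nU/r)$-Lipschitz functions is again $(16A_nU/r)$-Lipschitz, however many centers are active.
\item By max-stability (\cref{lem:gumbel}\textup{(iii)}), the maximum over $J_u$ of shifted Gumbel variables is one shifted Gumbel variable with density at most $\mathrm e^{-1}$, uniformly in $\card{J_u}$.
\item The overlap enters only through $A_n=\log(8m_n)\le Kt_n^{\varepsilon}$.
\end{enumerate}

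A minor point: \eqref{eq:ah} and \eqref{eq:abl} involve all scales through $\Ld$. A bounded extra coordinate cannot give the lower bound $d/\Ld(d)^\gamma$ at large distances. The logarithmic factors are not scale invariant, so you cannot normalize the diameter. The paper instead runs the construction over all $n\in\Z$.
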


\Needspace{5\baselineskip}
\begin{corollary}\label{cor:doubling}
Let $K\geq2$ and $\gamma>1$. Every $K$-doubling metric space admits a
$\gamma$-almost bi-Lipschitz embedding into $\R^N$ for every positive
integer $N$ satisfying $N>(3+4/(\gamma-1))\log_2K$.
The embedding constants depend only on $K,\gamma,N$.
\end{corollary}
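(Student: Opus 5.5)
The plan is to reduce \cref{cor:doubling} to \cref{thm:quant}. To do so, we show that a $K$-doubling metric space is $(M,s,\alpha,\beta)$-almost homogeneous with $s=\log_2K$ and with $\alpha=\beta=0$, and with $M$ depending only on $K$. Then $h=0$, and condition \eqref{eq:quantcond} becomes $N>3s+4s/(\gamma-1)=(3+4/(\gamma-1))\log_2K$, which is exactly the hypothesis of the corollary. The embedding constants from \cref{thm:quant} depend only on $M,s,\alpha,\beta,\gamma,N$, so they depend only on $K,\gamma,N$.

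The key step is the covering estimate. By \cref{def:doubling}, every ball of radius $r$ is covered by at most $K$ balls of radius $r/2$. Iterating this $k$ times, a ball of radius $r$ is covered by at most $K^k$ balls of radius $2^{-k}r$. Given $0<\rho<r$, choose the integer $k\geq0$ with $2^{-k}r\leq\rho<2^{-k+1}r$. Then $k< \log_2(r/\rho)+1$, so
\[
  K^k\leq K\,(r/\rho)^{\log_2K}.
\]
Hence every ball of radius $r$ is covered by at most $K(r/\rho)^{s}$ balls of radius $\rho$, with $s=\log_2K$.

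Almost homogeneity in \cref{def:ah} presumably allows extra logarithmic factors $\log(1/\rho)^{\beta}$ and $\log(1/r)^{\alpha}$, or an analogous form. Taking $\alpha=\beta=0$ then makes these factors trivial, and $M=K$ (or a fixed multiple of $K$) works. If the definition restricts radii to $r<1$ or to a bounded range, this only weakens what must be checked.

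The one real obstacle is matching the precise form of \cref{def:ah}. The definition may use covers by sets of diameter $\rho$ rather than balls, or centers in $X$, or the exponents $\alpha,\beta$ may be required to be positive. In the first case, a ball of radius $\rho/2$ has diameter at most $\rho$, so I would cover by balls of radius $\rho/2$ and absorb the extra factor $2^s=K$ into $M$, taking $M=K^2$. In the case where $\alpha,\beta$ must be positive, I would take them arbitrarily small. This perturbs $h$, and strict inequality in $N$ leaves room to choose $h$ so small that $N>3s+4(s+h)/(\gamma-1)$ still holds. The choice of $h$ depends only on $K,\gamma,N$, so the constants still depend only on $K,\gamma,N$.
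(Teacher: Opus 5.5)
Your proposal is correct and is the paper's proof. The paper cites \cref{prop:basic}(i), whose proof is your iteration with $k=\lceil\log_2(R/\rho)\rceil$ yielding $(K,\log_2K,0,0)$-almost homogeneity, and then substitutes $h=0$ into \eqref{eq:quantcond}. Your contingency plans are unnecessary, since \cref{def:ah} covers by open balls centered in $X$ and permits $\alpha=\beta=0$.
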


The main theorem also implies that the difference set of the image is doubling.
This gives affirmative answers to
\cite[Section~9, Question~(3)]{OlsonRobinson2010} and
\cite[Question~4.2]{Margaris2020v1}. We state this consequence as
\cref{cor:diff}.

The paper is organized as follows. \Cref{sec:prelims} contains the
definitions and preliminary results. \Cref{sec:proof} gives the
construction on finite subspaces and the passage to general spaces.

\section{Preliminaries}\label{sec:prelims}

All logarithms are natural unless their base is specified.
We write $\N:=\{1,2,\ldots\}$ and let $\Z$ denote the integers.
Subsets of metric spaces are equipped with the restricted metric.
All normed spaces in this paper are over the real numbers.

\Needspace{8\baselineskip}
\begin{definition}\label{def:metric-notation}
Let $(X,d)$ be a metric space. For $x\in X$ and $R>0$ write
$B_X(x,R):=\{y\in X:d(x,y)<R\}$. For $A\subseteq X$ and $\rho>0$
let $\cover_X(A,\rho)$ be the minimum number of open $\rho$-balls
with centers in $X$ needed to cover $A$. Set $\cover_X(A,\rho)=\infty$
when no finite cover exists. Allowing centers in a larger ambient metric
space gives the same notion of almost homogeneity. Indeed every covering
ball that meets $A$ can be recentered at a point of $A$ after doubling
its radius. Put $\diam A:=\sup\{d(x,y):x,y\in A\}$ and set
$\diam\varnothing:=0$. We omit the subscript in $B_X(x,R)$ when the
ambient space is clear. For $t>0$ put $\Ld(t):=1+|\log t|$.
\end{definition}

\Needspace{9\baselineskip}
\begin{definition}[{\cite[(1.2)]{OlsonRobinson2010},
 \cite[Definition~1.3]{Margaris2020v1}}]\label{def:ah}
Let $(X,d)$ be a metric space. Let $M\geq1$ and
$s,\alpha,\beta\geq0$ be finite real numbers. We say that $X$ is
\emph{$(M,s,\alpha,\beta)$-almost homogeneous} if
\begin{equation}\label{eq:ah}
 \cover_X(B_X(x,R),\rho)
 \leq M\left(\frac R\rho\right)^s
 \Ld(R)^\alpha\Ld(\rho)^\beta
 \qquad(x\in X,\ 0<\rho<R).
\end{equation}
We call $X$ \emph{almost homogeneous} if such parameters exist. We call
it \emph{homogeneous} if they can be chosen with $\alpha=\beta=0$.
Our convention follows \cite[Definition~1.3]{Margaris2020v1}. The parameter $\alpha$
is attached to the outer radius and $\beta$ to the inner radius. The
order of these two parameters is reversed in
\cite[(1.2)]{OlsonRobinson2010}.
\end{definition}

\Needspace{6\baselineskip}
\begin{definition}[{cf.\ \cite[Section~1]{NaorNeiman2012}}]\label{def:doubling}
Let $K\geq1$. A metric space $(X,d)$ is \emph{$K$-doubling} if every
open ball in $X$ can be covered by at most $K$ open balls of half its
radius. The centers are taken in $X$. It is \emph{doubling} if it is
$K$-doubling for some finite $K$.
\end{definition}

\Needspace{6\baselineskip}
\begin{definition}\label{def:norms}
For $N\in\N$ and $v=(v_1,\ldots,v_N)\in\R^N$ put
$\norm v_2:=(\sum_{j=1}^N|v_j|^2)^{1/2}$ and
$\norm v_\infty:=\max_{1\leq j\leq N}|v_j|$.
For a map $f:(E,d)\to\R^N$ from a metric space let $\Lip_\infty(f)$
be the infimum of all $L\geq0$ such that
$\norm{f(x)-f(y)}_\infty\leq Ld(x,y)$ for all $x,y\in E$. Set
$\Lip_\infty(f)=\infty$ if no such finite $L$ exists.
\end{definition}

\Needspace{9\baselineskip}
\begin{definition}[{\cite[(1.1)]{OlsonRobinson2010}}]\label{def:abl}
For $\gamma\geq0$ a map $F:(X,d)\to\R^N$ is
\emph{$\gamma$-almost bi-Lipschitz} if there exist $c,C>0$ such that
\begin{equation}\label{eq:abl}
 c\,\frac{d(x,y)}{\Ld(d(x,y))^\gamma}
 \leq\norm{F(x)-F(y)}_2\leq C\,d(x,y)
 \qquad(x,y\in X,\ x\ne y).
\end{equation}
We call $F$ \emph{almost bi-Lipschitz} if this holds for some finite
$\gamma\geq0$. Maps into other normed spaces are defined similarly. The corresponding
norm replaces $\norm{\cdot}_2$.
When $\gamma=0$, this is the usual bi-Lipschitz condition.
\end{definition}

\begin{remark}[Logarithmic convention]\label{rem:conventions}
Earlier work uses the symmetric logarithm
\(\operatorname{slog}(t):=\log(t+t^{-1})\) in place of \(\Ld(t)\).
These two functions are comparable. Let \(a=|\log t|\). Then
\(\max\{a,\log2\}\leq \log(\mathrm e^a+\mathrm e^{-a})\leq a+\log2\). Hence
\(\frac{\log2}{1+\log2}\Ld(t)\leq\operatorname{slog}(t)\leq\Ld(t)\)
for every \(t>0\). Thus replacing \(\operatorname{slog}\) by \(\Ld\)
changes only the
constants in \cref{def:ah,def:abl}. In particular our definitions give
the same classes of almost homogeneous spaces and almost bi-Lipschitz
maps as those used in the open problems cited in the Introduction. The
choice of \(\Ld\) is only a normalization of the logarithmic factor.
\end{remark}

\Needspace{7\baselineskip}
\begin{definition}\label{def:nets}
Let $(X,d)$ be a metric space. Let $A\subseteq X$ and $\rho>0$.
A subset $S\subseteq A$ is \emph{$\rho$-separated} if every two distinct
points of $S$ are at distance at least $\rho$. It is a
\emph{$\rho$-net} in $A$ if
$A\subseteq\bigcup_{u\in S}B_X(u,\rho)$.
Maximality of a separated set is understood with respect to inclusion.
\end{definition}

\Needspace{10\baselineskip}
\begin{proposition}\label{prop:basic}
Let $X$ be a metric space.
\begin{enumerate}[label=\textup{(\roman*)},leftmargin=2.2em]
\item Let $K\geq1$. If $X$ is $K$-doubling then it is
$(K,\log_2K,0,0)$-almost homogeneous.
\item If $X$ is $(M,s,0,0)$-almost homogeneous then it is
$\lceil M2^s\rceil$-doubling. In particular, homogeneity and the
doubling condition are equivalent.
\item Every almost homogeneous metric space is separable.
\end{enumerate}
\end{proposition}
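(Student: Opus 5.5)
We prove the three parts separately. Each uses only the covering definitions; the recurring trick is that a ball of radius $R$ is covered by balls of radius $\rho$ by iterating the doubling property $k$ times.

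For (i), fix $x\in X$ and $0<\rho<R$, and let $k\geq0$ be the least integer with $R/2^k\leq\rho$. Iterating the doubling property $k$ times covers $B(x,R)$ by at most $K^k$ balls of radius $R/2^k\leq\rho$. Enlarging each radius to $\rho$ keeps this a cover. If $k\geq1$, minimality gives $R/2^{k-1}>\rho$, so $2^{k-1}<R/\rho$. Hence
\[K^k=K\cdot\bigl(2^{k-1}\bigr)^{\log_2K}\leq K\,(R/\rho)^{\log_2K}.\]
If $k=0$, the bound $1\leq K(R/\rho)^{\log_2K}$ is trivial because $K\geq1$ and $R/\rho>1$. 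The logarithmic factors $\Ld\geq1$ only help, so $X$ is $(K,\log_2K,0,0)$-almost homogeneous. The one subtle point is that the doubling centers lie in $X$, matching \cref{def:metric-notation}.

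For (ii), take $\rho=R/2$ in \eqref{eq:ah} with $\alpha=\beta=0$. This gives $\cover_X(B(x,R),R/2)\leq M2^s$. The covering number is an integer, so it is at most $\lceil M2^s\rceil$. This is exactly $\lceil M2^s\rceil$-doubling. Together with (i), the doubling condition and homogeneity are equivalent.

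For (iii), fix any $x_0\in X$; then $X=\bigcup_{n\in\N}B(x_0,n)$. For each $n$ and each $m\in\N$, apply \eqref{eq:ah} with $R=n$ and $\rho=1/m<R$, which is possible after discarding the pairs with $1/m\geq n$. The ball $B(x_0,n)$ is then covered by finitely many balls of radius $1/m$ with centers in $X$. Let $D_{n,m}$ be this finite set of centers and put $D:=\bigcup_{n,m}D_{n,m}$, a countable set. Every $y\in X$ lies in some $B(x_0,n)$, so for every $m$ it is within $1/m$ of a point of $D$. Hence $D$ is dense and $X$ is separable. If $X$ is empty there is nothing to prove. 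None of the three parts poses a real obstacle; the only care needed is the integer rounding in (ii) and the exponent comparison $2^{k-1}<R/\rho$ in (i).
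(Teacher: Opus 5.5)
Your proof is correct and follows the paper's argument in all three parts. In (i) you use the same $\lceil\log_2(R/\rho)\rceil$-fold iteration of the doubling cover, in (ii) the substitution $\rho=R/2$, and in (iii) the countable set of centers of finite covers of the balls $B(x_0,n)$, where you handle the constraint $\rho<R$ by discarding the pair $(n,m)=(1,1)$ instead of using the paper's radius $\min\{1/k,m/2\}$; your separate case $k=0$ in (i) never occurs, since $\rho<R$ forces $k\geq1$.
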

\begin{proof}
\textup{(i)} Let $0<\rho<R$ and put $j:=\lceil\log_2(R/\rho)\rceil$.
Iterating the doubling cover $j$ times covers $B(x,R)$ by at most
$K^j$ balls of radius $R2^{-j}\leq\rho$. Thus
$\cover_X(B(x,R),\rho)\leq K^j\leq K(R/\rho)^{\log_2K}$.

\textup{(ii)} Substitute $\rho=R/2$ into \eqref{eq:ah}.

\textup{(iii)} The case $X=\varnothing$ is immediate. Otherwise, fix
$o\in X$ and, for each $m,k\in\N$, cover $B(o,m)$ by finitely many
balls of radius $\min\{1/k,m/2\}$. Let $D$ be the union of all their
centers. Then $D$ is countable. Given $x\in X$ and $\varepsilon>0$,
choose $m>d(o,x)$ and $k>1/\varepsilon$. One of the centers $z\in D$
of the corresponding cover satisfies $d(x,z)<1/k<\varepsilon$.
Thus $D$ is dense.
\end{proof}

\Needspace{6\baselineskip}
\begin{lemma}\label{lem:nets}
Let $(X,d)$ be a metric space. Let $A\subseteq X$ and $\rho>0$.
Every maximal $\rho$-separated subset of $A$ is a $\rho$-net in $A$.
Moreover, if the cardinalities of all finite $\rho$-separated subsets
of $A$ have a common finite upper bound then $A$ has a finite maximal
$\rho$-separated subset.
\end{lemma}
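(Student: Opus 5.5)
The first claim will come straight from maximality, and the second from Zorn's lemma combined with the cardinality bound; I argue both directly from \cref{def:nets}.

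\emph{First claim.} Let $S\subseteq A$ be a maximal $\rho$-separated subset and let $x\in A$. If $x\in S$, then $x\in B_X(x,\rho)$ and there is nothing to prove. If $x\notin S$, then $S\cup\{x\}$ strictly contains $S$. By maximality it is not $\rho$-separated. Since $S$ itself is $\rho$-separated, the failing pair must involve $x$. So there is $u\in S$ with $d(x,u)<\rho$, that is, $x\in B_X(u,\rho)$. Hence $A\subseteq\bigcup_{u\in S}B_X(u,\rho)$, and $S$ is a $\rho$-net in $A$.

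\emph{Second claim.} Suppose every finite $\rho$-separated subset of $A$ has cardinality at most some $n\in\N\cup\{0\}$. I will not use Zorn's lemma. Instead I take a $\rho$-separated subset $S\subseteq A$ of maximum cardinality among the finite ones. It exists because these cardinalities form a nonempty set of integers bounded by $n$; the empty set is $\rho$-separated, so the set is nonempty.

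I claim $S$ is maximal with respect to inclusion among all $\rho$-separated subsets, including infinite ones. Suppose $T\supsetneq S$ is $\rho$-separated, and pick $x\in T\setminus S$. Then $S\cup\{x\}\subseteq T$ is again $\rho$-separated, since separation passes to subsets. It is also finite with cardinality $\card S+1$. This contradicts the choice of $S$. So $S$ is a finite maximal $\rho$-separated subset of $A$.

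The only point needing care is this last one. Maximality has to be checked against arbitrary supersets, not just finite ones. The reduction works because any strictly larger separated set contains a strictly larger finite separated set. A secondary point is the case $A=\varnothing$, where $S=\varnothing$ works trivially.
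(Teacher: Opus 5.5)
Your proof is correct and essentially the paper's: the first claim uses the same maximality argument, and for the second you take a finite $\rho$-separated set of maximum cardinality, while the paper adds points greedily until the cardinality bound stops the process, which is an equivalent use of the same hypothesis (your explicit check against possibly infinite supersets is a welcome detail). The only blemish is expository: your opening sentence announces Zorn's lemma, which you then never use.
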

\begin{proof}
If $S\subseteq A$ is $\rho$-separated and
$x\in A\setminus\bigcup_{u\in S}B_X(u,\rho)$ then $S\cup\{x\}$
is also $\rho$-separated. Thus a maximal $S$ is a $\rho$-net.
For the second assertion, start with the empty set and add one such
point $x$ whenever possible. The common upper bound on the cardinalities
of finite $\rho$-separated sets forces this procedure to stop after
finitely many steps, at which point the resulting set is maximal.
\end{proof}

\Needspace{8\baselineskip}
\begin{lemma}\label{lem:packing}
Suppose that $X$ is $(M,s,\alpha,\beta)$-almost homogeneous.
Let $E\subseteq X$ and $x\in E$. Let $0<\rho\leq R$.
Every $\rho$-separated set $S\subseteq B_E(x,R)$ is finite and satisfies
\[
 \card S\leq A\left(\frac R\rho\right)^s
 \Ld(R)^\alpha\Ld(\rho)^\beta,\qquad A:=M3^s(1+\log3)^\beta.
\]
\end{lemma}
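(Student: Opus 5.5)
We prove this by the standard packing-versus-covering comparison, applying almost homogeneity \eqref{eq:ah} at the inner radius $\rho/2$, so that no covering ball can contain two points of $S$. Note that $B_E(x,R)\subseteq B_X(x,R)$ because $E$ carries the restricted metric. Hence $S\subseteq B_X(x,R)$.

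\emph{Main step.} Suppose first that $\rho/2<R$, which always holds since $\rho\le R$. By \eqref{eq:ah} with inner radius $\rho/2$, the ball $B_X(x,R)$ is covered by at most
\[
M\left(\frac{2R}{\rho}\right)^s\Ld(R)^\alpha\Ld(\rho/2)^\beta
\]
open balls of radius $\rho/2$ with centers in $X$. Two distinct points of $S$ lying in one such ball $B_X(z,\rho/2)$ would be at distance less than $\rho$, which contradicts separation. So each covering ball contains at most one point of $S$. Consequently $S$ is finite, and $\card S$ is at most the number of covering balls.

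\emph{Constants.} It remains to compare $2^s\Ld(\rho/2)^\beta$ with $A=M3^s(1+\log3)^\beta$ times $\Ld(\rho)^\beta$. The factor $2^s\le3^s$ is immediate. For the logarithm, $\card{\log(\rho/2)}\le\card{\log\rho}+\log2$, and therefore
\[
\Ld(\rho/2)\le\Ld(\rho)+\log2\le(1+\log2)\Ld(\rho)\le(1+\log3)\Ld(\rho),
\]
using $\Ld\ge1$. The $\log3$ and $3^s$ in the stated constant suggest the authors used inner radius $\rho/3$, or recentred balls with a factor $3$. Either choice fits within the stated $A$, since $3^s(1+\log3)^\beta$ dominates what we need.

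\emph{Main obstacle.} There is no real obstacle; the only point needing care is the logarithmic adjustment $\Ld(c\rho)\le(1+\card{\log c})\Ld(\rho)$. This holds for any fixed $c>0$ because $\Ld(\rho)\ge1$. One should also confirm that the inner radius is strictly less than $R$, so that \eqref{eq:ah} applies even in the edge case $\rho=R$; this holds since $\rho/2<\rho\le R$. If $\card S$ is a priori infinite, the injectivity argument still applies to every finite subset of $S$. This bounds all finite subsets uniformly, and therefore $S$ itself is finite with the same bound.
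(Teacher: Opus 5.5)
Your proof is correct and follows the paper's argument: cover $B_X(x,R)$ by small balls from \eqref{eq:ah}, note that each ball contains at most one point of $S$, and absorb the change of inner radius into the logarithmic factor. The only difference is that the paper uses inner radius $\rho/3$, which explains the constant $A$, whereas your radius $\rho/2$ also works because open balls are used, and it gives the slightly smaller constant $M2^s(1+\log2)^\beta\leq A$.
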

\begin{proof}
Cover $B_X(x,R)$ by balls of radius $\rho/3$ with centers in $X$.
Two points in the same ball are at distance less than $2\rho/3$. Hence
each ball contains at most one point of $S$. Hence
$\card S\leq\cover_X(B_X(x,R),\rho/3)$.
The conclusion follows from \eqref{eq:ah} and
$\Ld(\rho/3)\leq(1+\log3)\Ld(\rho)$.
\end{proof}

\Needspace{12\baselineskip}
\begin{lemma}\label{lem:euclidean}
Let $N\in\N$.
\begin{enumerate}[label=\textup{(\roman*)},leftmargin=2.2em]
\item For $v\in\R^N$, we have
$\norm v_\infty\leq\norm v_2\leq\sqrt N\norm v_\infty$.
\item Let $u\in\R^N$ and $R,\rho>0$. If
$S\subseteq B_{\R^N}(u,R)$ is $\rho$-separated in the Euclidean metric then
$S$ is finite and
\begin{equation}\label{eq:euclideanpacking}
 \card S\leq(1+2R/\rho)^N.
\end{equation}
\item Every subset of $\R^N$ is $5^N$-doubling in the induced
Euclidean metric.
\end{enumerate}
\end{lemma}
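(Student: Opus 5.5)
(i) is a direct comparison of norms. Write $\norm v_\infty=|v_k|$ for some index $k$; then $|v_k|^2\le\sum_j|v_j|^2$, which gives the left inequality. For the right inequality, bound each term by $\norm v_\infty^2$, so that $\sum_j|v_j|^2\le N\norm v_\infty^2$, and take square roots.

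(ii) is the standard volume argument. The open Euclidean balls $B_{\R^N}(p,\rho/2)$ with $p\in S$ are pairwise disjoint, because two distinct points of $S$ are at distance at least $\rho$. Each such ball lies in $B_{\R^N}(u,R+\rho/2)$. Write $\omega_N$ for the Lebesgue measure of the unit ball, so Lebesgue measure scales like $r^N$ under dilation. Then for every finite subfamily $S'\subseteq S$,
\[
\card{S'}\,\omega_N(\rho/2)^N\le\omega_N(R+\rho/2)^N,
\]
which gives $\card{S'}\le(1+2R/\rho)^N$. Since this bound holds for every finite subset $S'$, the set $S$ itself is finite and satisfies \eqref{eq:euclideanpacking}.

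(iii) reduces to (ii) via \cref{lem:nets}. Let $E\subseteq\R^N$, $x\in E$ and $r>0$. It suffices to cover $B_E(x,r)$ by at most $5^N$ balls of radius $r/2$ centred in $E$. Apply (ii) with $u=x$, $R=r$ and $\rho=r/2$: every $(r/2)$-separated subset of $B_E(x,r)$ is a separated subset of $B_{\R^N}(x,r)$, so its cardinality is at most $(1+2\cdot2)^N=5^N$. By the second assertion of \cref{lem:nets} there is a finite maximal $(r/2)$-separated subset $S$ of $B_E(x,r)$. By the first assertion, $S$ is an $(r/2)$-net in $B_E(x,r)$. Its points lie in $E$, so the balls $B_E(p,r/2)$ with $p\in S$ cover $B_E(x,r)$, and there are $\card S\le5^N$ of them.

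The only step that needs a little care is the finiteness claim in (ii): the measure comparison has to be applied to finite subfamilies first, and finiteness of $S$ then follows from the uniform bound. Everything else is routine.
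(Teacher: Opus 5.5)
Your proposal is correct and follows the paper's proof step for step. In (i) you compare the norms coordinatewise. In (ii) you use the volume argument with disjoint balls of radius $\rho/2$, applied first to finite subfamilies, and then deduce finiteness from the uniform bound. In (iii) you apply (ii) with $\rho=r/2$ and use \cref{lem:nets} to obtain a finite maximal separated set that serves as a net.
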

\begin{proof}
\textup{(i)} For each $j$, we have
$|v_j|^2\leq\sum_{k=1}^N|v_k|^2$. We also have
$\sum_{j=1}^N|v_j|^2\leq N\max_j|v_j|^2$.
Taking square roots gives the claim.

\textup{(ii)} First suppose that $S=\{x_1,\ldots,x_m\}$ is finite.
Since $\norm{x_i-x_j}_2\geq\rho$ for $i\ne j$, the open balls
$B_{\R^N}(x_i,\rho/2)$ are pairwise disjoint. Suppose two such
balls contain a common point $z$. The triangle inequality would give
$\norm{x_i-x_j}_2\leq\norm{x_i-z}_2+\norm{z-x_j}_2<\rho$. This is
a contradiction. For each $x_i\in B_{\R^N}(u,R)$ we also have $B_{\R^N}(x_i,\rho/2)\subseteq B_{\R^N}(u,R+\rho/2)$.
Write $\operatorname{vol}_N$ for $N$-dimensional Lebesgue measure and
let $\omega_N>0$ be the measure of the open unit ball. Since the
measure of a ball is proportional to the $N$th power of its radius,
finite additivity and monotonicity give
\[
 m\omega_N(\rho/2)^N
 =\operatorname{vol}_N\!\left(\bigcup_{i=1}^mB_{\R^N}(x_i,\rho/2)\right)
 \leq\omega_N(R+\rho/2)^N.
\]
Dividing by $\omega_N(\rho/2)^N$ gives \eqref{eq:euclideanpacking}.
For general $S$, the same bound holds for every finite subset of $S$.
If $S$ were infinite we could choose a finite subset whose cardinality
exceeds the bound. The same contradiction follows if $S$ has more
points than the bound allows.

\textup{(iii)} Let $D\subseteq\R^N$ and $a\in D$. Let $R>0$.
By \textup{(ii)} every $R/2$-separated subset of $B_D(a,R)$ has
cardinality at most $(1+2R/(R/2))^N=5^N$.
By \cref{lem:nets} there is a finite maximal $R/2$-separated set. It is
an $R/2$-net. The open $R/2$-balls centered at its points cover
$B_D(a,R)$. This proves the claim.
\end{proof}

\Needspace{8\baselineskip}
\begin{lemma}\label{lem:gauge}
For every $\gamma\geq0$, there is a constant $\kappa_\gamma>0$ such that
\[
 \frac{t}{\Ld(t)^\gamma}
 \geq\kappa_\gamma\frac{r}{\Ld(r)^\gamma}
 \qquad(t\geq r>0).
\]
Moreover, every $\gamma$-almost bi-Lipschitz map from a metric space
into a normed space is a homeomorphism onto its image.
\end{lemma}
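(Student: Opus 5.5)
The first assertion concerns $g(t):=t/\Ld(t)^\gamma$, and I will show that $g$ is quasi-increasing. When $\gamma=0$ we have $g(t)=t$, and $\kappa_0=1$ works. For $\gamma>0$ I split $(0,\infty)$ at $t=1$.

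On $[1,\infty)$ we have $\Ld(t)=1+\log t$. The logarithmic derivative of $g$ is $1/t-\gamma/(t(1+\log t))$, which is negative on $[1,\mathrm e^{\gamma-1})$ and nonnegative afterwards. So $g$ decreases from $g(1)=1$ to its minimum $m_\gamma:=g(\mathrm e^{\gamma-1})=\mathrm e^{\gamma-1}\gamma^{-\gamma}$ and then increases; in particular $g\le 1$ on $[1,\mathrm e^{\gamma-1}]$. On $(0,1]$ we have $\Ld(t)=1-\log t$. Here the logarithmic derivative is $1/t+\gamma/(t(1-\log t))>0$, so $g$ is increasing there and $g\le 1$.

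Now take $t\ge r>0$ and compare.
\begin{itemize}
\item If both lie in $(0,1]$, or both lie in $[\mathrm e^{\gamma-1},\infty)$, then $g(t)\ge g(r)$ by monotonicity.
\item If $t$ lies in $[1,\mathrm e^{\gamma-1}]$ and $r\le t$, then $g(t)\ge m_\gamma$. Also $g(r)\le 1$, because either $r\le1$ or $r\in[1,t]\subseteq[1,\mathrm e^{\gamma-1}]$.
\item If $t\ge\mathrm e^{\gamma-1}$ and $r$ is smaller, then $g(t)\ge m_\gamma$ and $g(r)\le 1$ by the same reasoning.
\end{itemize}
Hence $\kappa_\gamma:=\min\{1,m_\gamma\}$ works. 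I expect this case bookkeeping to be the only real work, and it is routine.

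For the second assertion, let $F$ be $\gamma$-almost bi-Lipschitz. The upper bound makes $F$ Lipschitz, hence continuous. The lower bound makes $F$ injective, since $g>0$. It remains to show that $F^{-1}$ is continuous on $F(X)$. Suppose $\norm{F(x_n)-F(x)}\to0$. By the lower bound, $g(d(x_n,x))\to0$ along the terms with $x_n\ne x$. Assume $d(x_n,x)\ge\varepsilon$ along a subsequence. Then the first part gives $g(d(x_n,x))\ge\kappa_\gamma g(\varepsilon)>0$, which is a contradiction. Hence $x_n\to x$, and $F^{-1}$ is continuous, so $F$ is a homeomorphism onto its image.
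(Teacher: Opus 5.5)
Your proof is correct in substance, but you reach the first inequality by a different route from the paper. The paper never studies the shape of $t/\Ld(t)^\gamma$. It writes $t=rv$ with $v\geq1$ and uses $|\log(rv)|\leq|\log r|+\log v$ to get the submultiplicative bound $\Ld(rv)\leq(1+\log v)\Ld(r)$. This reduces the claim to the one-variable function $v/(1+\log v)^\gamma$ on $[1,\infty)$. That function has positive infimum because it is continuous, positive, and tends to $\infty$. No split at $t=1$ and no case analysis is needed, since $r$ and $t$ on opposite sides of $1$ are handled by the triangle inequality for $|\log|$. Your calculus argument costs the case bookkeeping but gives an explicit constant. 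For $\gamma\geq1$ your $\kappa_\gamma=\mathrm e^{\gamma-1}\gamma^{-\gamma}$ is sharp (take $r=1$, $t=\mathrm e^{\gamma-1}$). It also equals the minimum of the paper's $v/(1+\log v)^\gamma$, which the paper does not compute. Your homeomorphism argument is the sequential form of the paper's contrapositive argument and is fine.

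One slip: your description of $g$ on $[1,\infty)$ is wrong for $0<\gamma<1$. Then $\mathrm e^{\gamma-1}<1$, and the logarithmic derivative $\frac1t\bigl(1-\frac{\gamma}{1+\log t}\bigr)$ is positive on all of $[1,\infty)$. Also $g(\mathrm e^{\gamma-1})=\mathrm e^{\gamma-1}(2-\gamma)^{-\gamma}$, not $m_\gamma$, and it is strictly smaller than $m_\gamma$. So your claim that $g(t)\geq m_\gamma$ for every $t\geq\mathrm e^{\gamma-1}$ fails at $t=\mathrm e^{\gamma-1}$. The conclusion survives: for $0\leq\gamma\leq1$, $g$ is increasing on all of $(0,\infty)$, so $\kappa_\gamma=1$ works, and $m_\gamma\leq1$ in any case. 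Treat $0\leq\gamma\leq1$ and $\gamma>1$ separately and your argument is complete.
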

\begin{proof}
Write $t=rv$ with $v\geq1$. Since
$|\log(rv)|\leq|\log r|+\log v$, we have
$\Ld(rv)\leq(1+\log v)\Ld(r)$ and hence
\(\frac{t/\Ld(t)^\gamma}{r/\Ld(r)^\gamma}
\geq v/(1+\log v)^\gamma\).
The right-hand side is continuous and positive on $[1,\infty)$ and
tends to infinity as $v\to\infty$. Its infimum $\kappa_\gamma$
is therefore positive. This proves the asserted lower bound.

Now let $F:(X,d)\to V$ be a $\gamma$-almost bi-Lipschitz map with
constants $c,C>0$. By definition, $F$ is injective and continuous.
For every $\varepsilon>0$, the lower bound above shows that
$d(x,y)\geq\varepsilon$ implies
\(\norm{F(x)-F(y)}\geq
c\kappa_\gamma\varepsilon/\Ld(\varepsilon)^\gamma>0\).
The contrapositive gives continuity of the inverse map.
\end{proof}

\section{Euclidean embeddings}\label{sec:proof}

We first construct embeddings of finite subspaces $E$ of $X$ with
constants independent of $E$. At each scale, we use a Gumbel correction
map that depends only on independent variables indexed by nearby points
and apply the
Lov\'asz local lemma. For embedding constructions that similarly use
locality to control the number of dependent events, see also
\cite{AbrahamBartalNeiman2008} and
\cite[Section~1.2]{NaorNeiman2012}.
Unless otherwise stated, Lipschitz constants of $\R^N$-valued maps in
this section are taken with respect to the maximum norm.

\Needspace{16\baselineskip}
\subsection{Scales}
\begin{definition}\label{def:scales}
Fix a metric space $(X,d)$ satisfying \eqref{eq:ah}. Fix $\gamma,N$
satisfying \eqref{eq:quantcond}. Put $h:=\alpha+\beta$ and
$a:=(\gamma-1)/2$. Condition \eqref{eq:quantcond} is equivalent to
$\Delta:=aN-2h-s(2+3a)>0$. Define
$q:=h+\Delta/8$ and $\varepsilon:=\Delta/(8N)$.
Then $q>h$ and $\varepsilon>0$. Moreover
$\delta:=(a-2\varepsilon)N-2q-s(2+3a)=\Delta/2>0$.
In particular $a>2\varepsilon$. For $T\geq2$ and $n\in\Z$ put
$t_n:=T+|n|$. Define $(r_n)_{n\in\Z}$ by
$r_0:=1$ and $r_{n+1}:=r_nt_n^{-a}$.
\end{definition}

The parameters $a,h,q,\varepsilon,\delta$ depend only on
$s,\alpha,\beta,\gamma,N$. Constants stated below to be independent
of $T$ depend only on $M,s,\alpha,\beta,\gamma,N$ and on the additional
real parameters specified in the relevant lemma. We fix $T$ after
\cref{lem:uniform}.

\Needspace{12\baselineskip}
\begin{lemma}\label{lem:scales}
The sequence $(r_n)_{n\in\Z}$ is strictly decreasing. We have
$r_n\to0$ as $n\to+\infty$ and $r_n\to\infty$ as $n\to-\infty$.
Consequently the intervals $[r_n,r_{n-1})$ partition $(0,\infty)$.
Moreover,
\[
 |\log r_n|\leq a|n|\log(T+|n|)\leq at_n\log t_n,\qquad
 t_{n-1}\leq2t_n,\quad t_n/t_{n+1}\leq2.
\]
Fix $\lambda_1,\lambda_2>0$ and $v_1,v_2\geq0$.
Let $R_n$ be either $\lambda_1r_nt_n^{-v_1}$ or $\lambda_1r_{n-1}$,
and let $\rho_n$ be either $\lambda_2r_nt_n^{-v_2}$ or
$\lambda_2r_{n-1}$. There are constants $C_1,C_q>0$ with the following
property. They depend only on
$s,\alpha,\beta,\gamma,N,\lambda_1,\lambda_2,v_1,v_2$. We have
\begin{equation}\label{eq:logwindow}
 \Ld(R_n)^\alpha\Ld(\rho_n)^\beta
 \leq C_1(t_n\log t_n)^h\leq C_qt_n^q.
\end{equation}
Furthermore there is a constant $C_*\geq1$. It depends only on
$M,s,\alpha,\beta,\gamma,N$ and has the following property. Every finite $E\subseteq X$
and every $r_n/8$-separated set $Z\subseteq E$ satisfy
\begin{equation}\label{eq:overlap}
 \card{Z\cap B_E(x,r_n/4)}\leq C_*t_n^q\qquad(x\in E).
\end{equation}
\end{lemma}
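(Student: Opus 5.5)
Since $t_n=T+|n|\geq2$ and $a>0$, the recursion gives $r_{n+1}/r_n=t_n^{-a}<1$, so the sequence is strictly decreasing. For the limits and the bound on $|\log r_n|$, I would write $\log r_n$ as a telescoping sum. For $n\geq1$, $\log r_n=-a\sum_{k=0}^{n-1}\log t_k$. For $n\leq-1$, $\log r_n=a\sum_{k=n}^{-1}\log t_k$. Each sum has $|n|$ terms, each at least $\log2>0$ and at most $\log(T+|n|)$. This gives $|\log r_n|\to\infty$ with the correct signs, and also $|\log r_n|\leq a|n|\log(T+|n|)\leq at_n\log t_n$, using $|n|\leq t_n$. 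Since the sequence is monotone and tends to $0$ and to $\infty$, the intervals $[r_n,r_{n-1})$ partition $(0,\infty)$.

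The inequalities $t_{n-1}\leq2t_n$ and $t_n/t_{n+1}\leq2$ follow from $t_{n\pm1}\geq t_n-1$ together with $t_n-1\geq t_n/2$, which holds because $t_n\geq2$.

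For \eqref{eq:logwindow}, I would bound $|\log R_n|$ by
\[
|\log\lambda_1|+|\log r_n|+v_1\log t_n
\quad\text{or}\quad
|\log\lambda_1|+|\log r_{n-1}|.
\]
Using $|\log r_{n-1}|\leq at_{n-1}\log t_{n-1}\leq 2a t_n\log(2t_n)$, each of these is at most a constant times $t_n\log t_n$, because $\log t_n\geq\log2$ and $t_n\geq2$ absorb the additive constants. Then $\Ld(R_n)\leq C t_n\log t_n$, and likewise for $\rho_n$. Raising to the powers $\alpha$ and $\beta$ gives $C_1(t_n\log t_n)^h$. The second inequality holds because $(\log t)^h\leq C t^{q-h}$ for $t\geq2$, since $q>h$. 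The constants depend only on the listed parameters; $T$ enters only through $t_n$.

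For \eqref{eq:overlap}, I would apply \cref{lem:packing} with $R=r_n/4$ and $\rho=r_n/8$, which satisfies $\rho\leq R$. The ratio $R/\rho=2$, so the bound is $A2^s\Ld(r_n/4)^\alpha\Ld(r_n/8)^\beta$. This is exactly the form in \eqref{eq:logwindow}, with $\lambda_1=1/4$, $\lambda_2=1/8$ and $v_1=v_2=0$. So it is at most $A2^sC_qt_n^q$, and I would take $C_*:=\max\{1,A2^sC_q\}$. There is no real obstacle here. The only care needed is to make the constants uniform in $T\geq2$ and $n$, which the estimates $t_n\geq2$ and $\log t_n\geq\log2$ handle.
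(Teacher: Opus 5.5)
Your proposal is correct and follows essentially the same route as the paper: the product (telescoping) formula for $r_n$, the neighbor comparison of $t_n$, bounding each $\Ld$ factor by a constant times $t_n\log t_n$ and absorbing the logarithm using $q>h$, and then \cref{lem:packing} with $R=r_n/4$, $\rho=r_n/8$. The only differences are cosmetic. You bound $|\log r_{n-1}|$ by applying the general estimate at index $n-1$, whereas the paper uses $r_{n-1}=r_nt_{n-1}^a$. Your choice $C_*:=\max\{1,A2^sC_q\}$ makes the requirement $C_*\geq1$ explicit.
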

\begin{proof}
For $m\geq1$, the recurrence relation gives
$r_m=\prod_{j=0}^{m-1}(T+j)^{-a}$ and
$r_{-m}=\prod_{j=1}^{m}(T+j)^a$.
Since the base of each factor is at least $T\geq2$, monotonicity and
both limits follow. Bounding each logarithmic sum by the number of
terms times its largest term gives
$|\log r_n|\leq a|n|\log(T+|n|)$.
The comparisons between neighboring $t_n$ follow from
$\bigl||n+1|-|n|\bigr|\leq1$ and $t_n\geq2$.

Using $|\log(xy)|\leq|\log x|+|\log y|$, we obtain
$\Ld(\lambda r_nt_n^{-v})\leq
1+at_n\log t_n+|\log\lambda|+v\log t_n$.
Using $r_{n-1}=r_nt_{n-1}^a$ and $t_{n-1}\leq2t_n$, we also obtain
$\Ld(\lambda r_{n-1})\leq1+at_n\log t_n+|\log\lambda|+a\log(2t_n)$.
Both expressions are therefore at most $C t_n\log t_n$. The constant
$C$ can be chosen to depend only on $a,\lambda,v$.
Applying these bounds to $R_n$ and $\rho_n$ gives the first inequality
in \eqref{eq:logwindow}. The second follows from
$\sup_{t\geq2}(\log t)^h/t^{q-h}<\infty$.
Finally, applying \cref{lem:packing} with $R=r_n/4$ and $\rho=r_n/8$
and then using \eqref{eq:logwindow} gives \eqref{eq:overlap}.
\end{proof}

\Needspace{16\baselineskip}
\subsection{Separation at one scale}

\Needspace{10\baselineskip}
\begin{definition}[{\cite[Section~2]{MaddisonTarlowMinka2014}}]\label{def:gumbel}
A real-valued random variable $\zeta$ has the \emph{standard Gumbel
distribution} if its distribution function is
$Q(t):=\mathbb P(\zeta\leq t)=\exp(-\mathrm e^{-t})$ for $t\in\R$.
For $A>0$, the distribution obtained by conditioning a standard
Gumbel variable on the event $\{|\zeta|\leq A\}$ is called the
\emph{$A$-truncated standard Gumbel distribution}.
\end{definition}

We use the following elementary properties of the Gumbel distribution.

\Needspace{15\baselineskip}
\begin{lemma}\label{lem:gumbel}
A standard Gumbel variable $\zeta$ satisfies the following properties.
\begin{enumerate}[label=\textup{(\roman*)},leftmargin=2.2em]
\item The variable $\zeta$ has density
$g(t)=\mathrm e^{-t}\exp(-\mathrm e^{-t})$. Moreover
$\sup_{t\in\R}g(t)=\mathrm e^{-1}<1/2$.
\item For every $A\geq0$ we have
$\mathbb P(|\zeta|>A)\leq2\mathrm e^{-A}$.
\item Let $J$ be a nonempty finite set. Let $(\zeta_z)_{z\in J}$
be independent standard Gumbel variables. Let
$(c_z)_{z\in J}\subset\R$. With $b:=\log\sum_{z\in J}\mathrm e^{c_z}$, we have
$\mathbb P(\max_{z\in J}(\zeta_z+c_z)\leq t)=Q(t-b)$ for every
$t\in\R$. Thus the maximum has the distribution of a standard Gumbel variable
shifted by $b$.
\item Suppose that $Y_1,Y_2$ are independent and that each has a
shifted standard Gumbel distribution. Then $Y_1-Y_2$ has a density
bounded above by $1/2$ at every point. Consequently every interval $I\subset\R$ of length $\ell>0$
satisfies $\mathbb P(Y_1-Y_2\in I)\leq\ell/2$.
\item Let $A>0$. Let $\zeta_1,\ldots,\zeta_m$ be independent
standard Gumbel variables. Put
$\mathcal C:=\bigcap_{j=1}^m\{|\zeta_j|\leq A\}$.
Conditionally on $\mathcal C$ the variables $\zeta_1,\ldots,\zeta_m$
remain independent. Each has the $A$-truncated standard Gumbel
distribution.
\end{enumerate}
\end{lemma}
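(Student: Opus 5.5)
I plan to verify the five items directly from the distribution function $Q(t)=\exp(-\mathrm e^{-t})$. Only elementary calculus and independence are needed.

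For \textup{(i)}, I differentiate $Q$ to obtain $g(t)=\mathrm e^{-t}\exp(-\mathrm e^{-t})$. Substituting $u=\mathrm e^{-t}\in(0,\infty)$ gives $g=u\mathrm e^{-u}$. This is maximized at $u=1$, so $\sup g=\mathrm e^{-1}<1/2$.

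For \textup{(ii)}, I split the event into its two tails.
\begin{itemize}
\item Upper tail: $\mathbb P(\zeta>A)=1-\exp(-\mathrm e^{-A})\leq\mathrm e^{-A}$, using $1-\mathrm e^{-x}\leq x$.
\item Lower tail: $\mathbb P(\zeta<-A)=\exp(-\mathrm e^{A})\leq\mathrm e^{-\mathrm e^{A}}\leq\mathrm e^{-A}$, using $\mathrm e^{A}\geq A$.
\end{itemize}
Summing the two tails gives the bound $2\mathrm e^{-A}$.

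For \textup{(iii)}, independence gives
\[
\mathbb P\bigl(\max_z(\zeta_z+c_z)\leq t\bigr)=\prod_z Q(t-c_z)=\exp\Bigl(-\mathrm e^{-t}\sum_z\mathrm e^{c_z}\Bigr).
\]
Since $\sum_z\mathrm e^{c_z}=\mathrm e^{b}$, this equals $\exp(-\mathrm e^{-(t-b)})=Q(t-b)$.

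For \textup{(iv)}, write $Y_i=\zeta_i+b_i$ with $\zeta_1,\zeta_2$ independent standard Gumbel variables. Then $Y_1-Y_2$ has density
\[
p(x)=\int g(x+b_2-b_1+y)\,g(y)\,dy.
\]
This gives $p(x)\leq\sup g\cdot\int g=\mathrm e^{-1}<1/2$. Integrating $p$ over an interval $I$ of length $\ell$ then gives $\mathbb P(Y_1-Y_2\in I)\leq\ell/2$. The only care needed is to note that the convolution exists and is a genuine density; this follows from Fubini's theorem.

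For \textup{(v)}, first note that $\mathbb P(\mathcal C)=\prod_j\mathbb P(|\zeta_j|\leq A)>0$, because $g$ is positive everywhere. For Borel sets $B_j$, independence gives
\[
\mathbb P\Bigl(\bigcap_j\{\zeta_j\in B_j\}\Bigm|\mathcal C\Bigr)=\prod_j\frac{\mathbb P(\zeta_j\in B_j,\ |\zeta_j|\leq A)}{\mathbb P(|\zeta_j|\leq A)}.
\]
Each factor is the $A$-truncated law evaluated at $B_j$. The product form gives conditional independence.

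No step is a genuine obstacle. The only point requiring attention is \textup{(iv)}, where the constant must be tracked correctly, and the convolution bound handles it.
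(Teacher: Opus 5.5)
Your proposal is correct and follows essentially the same route as the paper. It uses the substitution $u=\mathrm e^{-t}$ in (i), the two-tail split with $1-\mathrm e^{-x}\leq x$ and $\mathrm e^{A}\geq A$ in (ii), the product of distribution functions in (iii), the convolution bound $\sup g\cdot\int g\leq 1/2$ in (iv), and the factorization of the conditional probability over Borel sets in (v); your remark that $\mathbb P(\mathcal C)>0$ is a small point the paper leaves implicit.
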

\begin{proof}
\textup{(i)} Differentiating the distribution function $Q$ gives
$Q'(t)=\mathrm e^{-t}\exp(-\mathrm e^{-t})=g(t)$.
With $u=\mathrm e^{-t}>0$ we have $g(t)=u\mathrm e^{-u}$. This function
maximum is $\mathrm e^{-1}$, attained at $u=1$.

\textup{(ii)} The distribution function gives
$\mathbb P(|\zeta|>A)=1-\exp(-\mathrm e^{-A})+\exp(-\mathrm e^A)$.
Applying $1-\mathrm e^{-x}\leq x$ for $x\geq0$ to the first term
on the right gives $1-\exp(-\mathrm e^{-A})\leq\mathrm e^{-A}$.
Also $\mathrm e^A\geq A$ for $A\geq0$. Hence
$\exp(-\mathrm e^A)\leq\mathrm e^{-A}$.
This proves the tail estimate.

\textup{(iii)} By independence we have for every $t\in\R$
$\mathbb P(\max_{z\in J}(\zeta_z+c_z)\leq t)\allowbreak
=\exp(-\mathrm e^{-t}\sum_{z\in J}\mathrm e^{c_z})\allowbreak
=\exp(-\mathrm e^{-(t-b)})=Q(t-b)$. This proves the stated identity.

\textup{(iv)} Let $g_1$ and $g_2$ be the densities of $Y_1$ and $Y_2$.
Both are translates of $g$. Hence
$\sup g_1=\sup g_2=\mathrm e^{-1}<1/2$.
By independence, $Y_1-Y_2$ has density
$h(t)=\int_{\R}g_1(t+y)g_2(y)\,dy$. It satisfies
$h(t)\leq(1/2)\int_{\R}g_2(y)\,dy=1/2$.
Hence $\mathbb P(Y_1-Y_2\in I)=\int_I h(t)\,dt\leq\ell/2$.

\textup{(v)} Let $B_1,\ldots,B_m\subseteq[-A,A]$ be Borel sets.
Independence gives
\[
 \mathbb P\!\left(\bigcap_{j=1}^m\{\zeta_j\in B_j\}\,\middle|\,\mathcal C\right)
 =\frac{\prod_{j=1}^m\mathbb P(\zeta_j\in B_j)}
        {\prod_{j=1}^m\mathbb P(|\zeta_j|\leq A)}
 =\prod_{j=1}^m
   \mathbb P(\zeta_j\in B_j\mid |\zeta_j|\leq A).
\]
Thus the variables remain independent under the conditioning. Each
marginal distribution is the $A$-truncated standard Gumbel distribution
by definition.
\end{proof}

\Needspace{16\baselineskip}
\begin{definition}\label{def:correction}
Let $T\geq2$ and $n\in\Z$. Let $E\subseteq X$ be nonempty and finite.
Put $r:=r_n$, $m_n:=C_*t_n^q$ and $A_n:=\log(8m_n)$. Let
$Z\subseteq E$ be a maximal $r/8$-separated set.
For $U>0$ and real coefficients $\xi_{z,i}\in[-A_n,A_n]$ define
$H=(H_1,\ldots,H_N):E\to\R^N$ by
\begin{equation}\label{eq:h}
 H_i(x):=U\max_{z\in Z}
 \left\{\xi_{z,i}-\frac{16A_n}{r}d(x,z)\right\}
 \qquad(x\in E,\ 1\leq i\leq N).
\end{equation}
We call $H$ a \emph{Gumbel correction map} at scale $r$.
Later, $H$ will be added to a previously constructed map $G$ to separate
pairs of points at scale $r$.
When choosing the coefficients randomly, we take each $\xi_{z,i}$
independently from the $A_n$-truncated standard Gumbel distribution
(\cref{def:gumbel}) and write $\mathbb P$ for the probability measure
determined by these independent choices.
\end{definition}

\Needspace{13\baselineskip}
\begin{lemma}\label{lem:correction}
In the setting of \cref{def:correction}, put
$J_x:=Z\cap B_E(x,r/4)$ for each $x\in E$.
\begin{enumerate}[label=\textup{(\roman*)},leftmargin=2.2em]
\item The set $J_x$ is nonempty. For every choice of coefficients the
maximum in \eqref{eq:h} is attained at a point of $J_x$.
Moreover,
\begin{equation}\label{eq:hbds}
 \sup_{x\in E}\norm{H(x)}_\infty\leq3A_nU,\qquad
 \Lip_\infty(H)\leq16A_nU/r.
\end{equation}
\item Fix a map $G:E\to\R^N$ and points $u,v\in E$ with
$d(u,v)\geq r/2$. Then for every $e>0$
\begin{equation}\label{eq:antic}
 \mathbb P\{\norm{(G+H)(u)-(G+H)(v)}_\infty<4e\}
 \leq(8e/U)^N.
\end{equation}
\end{enumerate}
\end{lemma}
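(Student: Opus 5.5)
For \textup{(i)} I would argue deterministically. By \cref{lem:nets}, the maximal $r/8$-separated set $Z$ is an $r/8$-net in $E$. Hence for each $x\in E$ there is $z_0\in Z$ with $d(x,z_0)<r/8$, and in particular $J_x\neq\varnothing$.

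Note that $A_n=\log(8m_n)>0$, since $m_n=C_*t_n^q\geq1$. For every $z\in Z$ with $d(x,z)\geq r/4$, the bracket in \eqref{eq:h} is at most $A_n-4A_n=-3A_n$. The bracket at $z_0$ is strictly greater than $-A_n-2A_n=-3A_n$. So the maximum is attained in $J_x$.

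The same two estimates give $-3A_nU<H_i(x)\leq A_nU$, which is the sup bound in \eqref{eq:hbds}. Each map $x\mapsto U(\xi_{z,i}-16A_nd(x,z)/r)$ is $16A_nU/r$-Lipschitz, and a finite maximum of such maps keeps that constant. This gives the Lipschitz bound coordinatewise, hence for $\Lip_\infty$.

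For \textup{(ii)}, the first step is that $J_u\cap J_v=\varnothing$. A common point $z$ would give $d(u,v)<r/4+r/4=r/2$, contradicting $d(u,v)\geq r/2$. By \textup{(i)}, $H_i(u)$ depends only on $(\xi_{z,i})_{z\in J_u}$ and $H_i(v)$ only on $(\xi_{z,i})_{z\in J_v}$. Therefore the $2N$ quantities $H_i(u),H_i(v)$ are built from pairwise disjoint families of independent coefficients.

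The event in \eqref{eq:antic} is the intersection over $i$ of the events
\[
 H_i(u)/U-H_i(v)/U\in I_i,
\]
where $I_i$ is an interval of length $8e/U$ determined by $G_i(u)-G_i(v)$. These $N$ events are independent, so it suffices to bound each one by $8e/U$.

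The main obstacle is the truncation, since \cref{lem:gumbel}\textup{(iii),(iv)} are stated for untruncated Gumbel variables. I would handle it as follows. Fix $i$ and realize the coefficients $(\xi_{z,i})_{z\in J_u\cup J_v}$ as independent standard Gumbel variables $\zeta_z$ conditioned on $\mathcal C=\bigcap_z\{|\zeta_z|\leq A_n\}$, which is legitimate by \cref{lem:gumbel}\textup{(v)}.

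By \eqref{eq:overlap}, $\card{J_u\cup J_v}\leq2m_n$. By \cref{lem:gumbel}\textup{(ii)} and a union bound,
\[
 \mathbb P(\mathcal C^c)\leq 2m_n\cdot2\mathrm e^{-A_n}=\tfrac12,
\]
so $\mathbb P(\mathcal C)\geq1/2$. Hence the conditional probability of the event is at most twice its unconditional probability.

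Without conditioning, \cref{lem:gumbel}\textup{(iii)} shows that $\max_{z\in J_u}(\zeta_z-16A_nd(u,z)/r)$ and the analogous maximum over $J_v$ are independent shifted standard Gumbel variables. By \cref{lem:gumbel}\textup{(iv)}, their difference lies in $I_i$ with probability at most $(8e/U)/2=4e/U$. Doubling for the conditioning gives $8e/U$ per coordinate. Taking the product over $i=1,\ldots,N$ yields \eqref{eq:antic}.
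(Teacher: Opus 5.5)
Your proposal is correct and follows the paper's proof essentially step for step. For (i) you use the same $r/8$-net point $z_0$ and the $-3A_n$ threshold. For (ii) you use the disjointness of $J_u$ and $J_v$, realize the truncated coefficients as standard Gumbel variables conditioned on $\mathcal C$ with $\mathbb P_0(\mathcal C)\geq1/2$ via \eqref{eq:overlap} and a union bound, apply \cref{lem:gumbel}\textup{(iii),(iv)} to get $8e/U$ per coordinate, and take the product over the independent coordinates.
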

\begin{proof}
\textup{(i)} By \cref{lem:nets}, there exists $z_0\in Z$ with
$d(x,z_0)<r/8$. At this point,
$\xi_{z_0,i}-(16A_n/r)d(x,z_0)>-3A_n$.
If $z\in Z\setminus J_x$ then $d(x,z)\geq r/4$. Hence
$\xi_{z,i}-(16A_n/r)d(x,z)\leq-3A_n$.
Thus the maximum is attained at a point of $J_x$. Moreover
$-3A_n<H_i(x)/U\leq A_n$.
Moreover, the inequality
$|\max_z a_z-\max_z b_z|\leq\max_z|a_z-b_z|$ and the triangle
inequality for the distance function give
$|H_i(x)-H_i(y)|\leq(16A_nU/r)d(x,y)$. This proves the stated bounds.

\textup{(ii)} Fix a coordinate $i$. Consider independent standard Gumbel
variables $\zeta_z$ without conditioning. Denote their probability
measure by $\mathbb P_0$.
The inequality $d(u,v)\geq r/2$ implies $J_u\cap J_v=\varnothing$.
Indeed, a point $z$ in the intersection would give
$d(u,v)\leq d(u,z)+d(z,v)<r/2$.
Take independent $\zeta_z$ indexed by $J_u\cup J_v$. For $x\in\{u,v\}$ put
$M_x:=\max_{z\in J_x}\{\zeta_z-(16A_n/r)d(x,z)\}$. Let
$\mathcal C:=\bigcap_{z\in J_u\cup J_v}\{|\zeta_z|\leq A_n\}$.
The overlap bound gives $\card{J_u\cup J_v}\leq2m_n$. The union
bound and the Gumbel tail estimate then give
$\mathbb P_0(\mathcal C)\geq1-4m_n\mathrm e^{-A_n}=1/2$.
By \cref{lem:gumbel}\textup{(iii)} the variables $M_u$ and $M_v$ are
independent shifted standard Gumbel variables. Part~\textup{(iv)} of the same lemma shows
that $G_i(u)-G_i(v)+U(M_u-M_v)$ has density bounded above by $1/(2U)$.
Let $B_i$ be the event that this variable lies in $(-4e,4e)$. This
interval has length $8e$. Then
$\mathbb P_0(B_i\mid\mathcal C)\leq
\mathbb P_0(B_i)/\mathbb P_0(\mathcal C)\leq8e/U$.
By \cref{lem:gumbel}\textup{(v)} the coefficients remain independent
conditionally on $\mathcal C$. Each has the $A_n$-truncated standard
Gumbel distribution. Their joint distribution therefore agrees with
that of $(\xi_{z,i})_{z\in J_u\cup J_v}$ in \cref{def:correction}.
By \textup{(i)} the values $H_i(u)$ and $H_i(v)$ are determined by this family of
coefficients alone. Thus the preceding estimate also bounds the
probability that $|(G_i+H_i)(u)-(G_i+H_i)(v)|<4e$.
The coefficient families used for coordinates $i=1,\ldots,N$ are
independent. Multiplying these bounds gives \eqref{eq:antic}.
The estimate is independent of $G$. It is also independent of $\card E$
and $\card Z$.
\end{proof}

\Needspace{7\baselineskip}
\begin{definition}[{\cite[Section~1]{MoserTardos2010}}]\label{def:dependency}
Let $(A_i)_{i\in I}$ be a finite family of events in a probability space.
A simple undirected graph on vertex set $I$ is a \emph{dependency graph}
for this family if the following holds. For each $i\in I$ the event
$A_i$ is independent of the $\sigma$-algebra generated by the events
$A_j$ with $j\notin\{i\}\cup\Gamma(i)$. Here $\Gamma(i)$ is the set of
vertices adjacent to $i$.
\end{definition}

We use the following form of the Lov\'asz local lemma.

\begin{lemma}\label{lem:lll}
Suppose that a finite nonempty family of events has a dependency graph
of maximum degree $D$. Suppose also that each event has probability at
most $p_0$. If $4p_0(D+1)\leq1$ then the probability that none of the
events occurs is positive.
\end{lemma}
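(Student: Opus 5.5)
I would prove \cref{lem:lll} by the standard inductive argument for the symmetric local lemma. The constant $4$ in place of $\mathrm e$ leaves enough room that all estimates close with the factor $2$.

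Let the events be $A_1,\ldots,A_n$ with dependency graph $\Gamma$. The central claim is the following. For every $S\subseteq\{1,\ldots,n\}$ and every $i\notin S$ we have $\mathbb P(\bigcap_{j\in S}A_j^c)>0$ and
\[
 \mathbb P\Bigl(A_i\,\Bigm|\,\bigcap_{j\in S}A_j^c\Bigr)\leq 2p_0 .
\]
I would prove this by induction on $\card S$. The case $S=\varnothing$ is immediate, since $\mathbb P(A_i)\leq p_0$.

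For the induction step, split $S$ into $S_1:=S\cap\Gamma(i)$ and $S_2:=S\setminus\Gamma(i)$. If $S_1=\varnothing$, the event $\bigcap_{S}A_j^c$ lies in the $\sigma$-algebra generated by the $A_j$ with $j\notin\{i\}\cup\Gamma(i)$. The independence in \cref{def:dependency} then gives the conditional probability $\mathbb P(A_i)\leq p_0$. If $S_1\neq\varnothing$, write
\[
 \mathbb P\Bigl(A_i\Bigm|\bigcap_{S}A_j^c\Bigr)
 =\frac{\mathbb P\bigl(A_i\cap\bigcap_{S_1}A_j^c\bigm|\bigcap_{S_2}A_j^c\bigr)}
 {\mathbb P\bigl(\bigcap_{S_1}A_j^c\bigm|\bigcap_{S_2}A_j^c\bigr)} .
\]
\begin{itemize}
\item The numerator is at most $\mathbb P(A_i\mid\bigcap_{S_2}A_j^c)=\mathbb P(A_i)\leq p_0$, again by the independence of $A_i$ from the $\sigma$-algebra generated by the events outside $\{i\}\cup\Gamma(i)$.
\item For the denominator, the union bound gives at least $1-\sum_{j\in S_1}\mathbb P(A_j\mid\bigcap_{S_2}A_k^c)$. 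Since $\card{S_2}<\card S$ and $j\notin S_2$, the induction hypothesis bounds each term by $2p_0$. The denominator is therefore at least $1-2p_0D\geq1/2$, because $4p_0D\leq1$.
\end{itemize}
Hence the ratio is at most $2p_0$.

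Positivity of the conditioning probability propagates along the induction. Enumerate $S=\{j_1,\ldots,j_k\}$. Then $\mathbb P(\bigcap_{S}A_j^c)$ is the product of the factors $1-\mathbb P(A_{j_l}\mid\bigcap_{m<l}A_{j_m}^c)$. Each factor is at least $1-2p_0\geq1/2$, by the claim applied to the strictly smaller sets $\{j_1,\ldots,j_{l-1}\}$. So I would formally run the induction on the combined statement: positivity together with the bound.

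Finally, applying the same product expansion to $S=\{1,\ldots,n\}$ gives
\[
 \mathbb P\Bigl(\bigcap_{i=1}^nA_i^c\Bigr)\geq(1-2p_0)^n\geq2^{-n}>0 .
\]
Here $2p_0\leq1/2$ because $4p_0(D+1)\leq1$.

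The only delicate point is bookkeeping. Conditional probabilities must be taken only on events already known to have positive probability. The independence hypothesis must be used in exactly the form of \cref{def:dependency}, namely independence of $A_i$ from the whole $\sigma$-algebra and not just pairwise independence. Treating positivity and the bound together in one induction handles this.
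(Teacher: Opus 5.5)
Your proof is correct, but it is not the route the paper takes.

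The paper reproves nothing. It invokes the general (asymmetric) local lemma, Theorem~1.1 of Moser and Tardos, with the uniform weight $x=1/[2(D+1)]$ at every vertex. It then only checks the hypothesis $p_0\leq x(1-x)^D$, using Bernoulli's inequality $(1-x)^D\geq1-Dx\geq1/2$.

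You instead run the classical Erd\H{o}s--Lov\'asz conditioning induction directly in the symmetric setting, proving $\mathbb P(A_i\mid\bigcap_{j\in S}A_j^c)\leq2p_0$. Because the constant is $4$ rather than $\mathrm e$, the crude union bound $1-2p_0D\geq1/2$ suffices in the denominator. You therefore never need the product estimate $\prod_{j\in\Gamma(i)}(1-x_j)$ that drives the general lemma.

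What each buys:
\begin{itemize}
\item The paper's route is two lines and delegates the probabilistic bookkeeping to a standard reference.
\item Your route is self-contained. It shows exactly where the independence of $A_i$ from the whole $\sigma$-algebra in \cref{def:dependency} is used, rather than mere pairwise independence.
\item It also handles positivity of the conditioning events explicitly through the combined strong induction.
\item It yields the explicit lower bound $(1-2p_0)^n$. This is at least the bound $(1-x)^n$ that the general lemma gives with the paper's choice of $x$, since $4p_0(D+1)\leq1$ means exactly $2p_0\leq x$.
\end{itemize}
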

\begin{proof}
Apply the general local lemma \cite[Theorem~1.1]{MoserTardos2010},
using the same parameter $x:=1/[2(D+1)]$ at each vertex.
Bernoulli's inequality gives $(1-x)^D\geq1-Dx\geq1/2$. Hence
$x(1-x)^D\geq1/[4(D+1)]\geq p_0$.
This verifies the hypotheses of that theorem, including when $D=0$.
\end{proof}

\Needspace{13\baselineskip}
\begin{lemma}\label{lem:separation}
Fix $\lambda>0$ and $\nu\geq0$. There is a constant $C_2>0$ with the following property. The constant
depends only on $M,s,\alpha,\beta,\gamma,N,\lambda,\nu$. Let
$T,n,E,Z$ be as in \cref{def:correction}. Suppose that
$\eta:=\lambda r_nt_n^{-\nu}\leq r_n/4$.
Let $S\subseteq E$ be $\eta$-separated. If $U,e>0$ satisfy
\[
 4(8e/U)^N\left[1+C_2t_n^{2q}
       \left(\frac{r_nr_{n-1}}{\eta^2}\right)^s\right]\leq1,
\]
then the following holds for every fixed map $G:E\to\R^N$. The coefficients in
\eqref{eq:h} can be chosen so that, in addition to \eqref{eq:hbds},
\[
 \norm{(G+H)(u)-(G+H)(v)}_\infty\geq4e
 \quad\left(u,v\in S,\quad r_n/2\leq d(u,v)\leq2r_{n-1}\right).
\]
\end{lemma}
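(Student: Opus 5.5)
We apply the Lov\'asz local lemma (\cref{lem:lll}) to the random coefficients of \cref{def:correction}. Since \eqref{eq:hbds} holds deterministically by \cref{lem:correction}\textup{(i)}, only the separation property needs proof. Let $P$ be the set of unordered pairs $\{u,v\}\subseteq S$ with $r_n/2\leq d(u,v)\leq2r_{n-1}$. If $P$ is empty there is nothing to prove. Otherwise, for each $\{u,v\}\in P$ let $\mathcal B_{uv}$ be the bad event that $\norm{(G+H)(u)-(G+H)(v)}_\infty<4e$. Since $G$ is fixed and $d(u,v)\geq r_n/2$, \cref{lem:correction}\textup{(ii)} gives $\mathbb P(\mathcal B_{uv})\leq(8e/U)^N=:p_0$.

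For the dependency graph, \cref{lem:correction}\textup{(i)} shows that $H(u)$ and $H(v)$ are determined by the coefficients $\xi_{z,i}$ with $z\in J_u\cup J_v$. So $\mathcal B_{uv}$ is measurable with respect to the independent family $\{\xi_{z,i}:z\in J_u\cup J_v,\ 1\leq i\leq N\}$. Join $\{u,v\}$ and $\{u',v'\}$ by an edge when $(J_u\cup J_v)\cap(J_{u'}\cup J_{v'})\neq\varnothing$. Because the coefficients are independent, each event is independent of the $\sigma$-algebra generated by the events it is not joined to, so this is a dependency graph in the sense of \cref{def:dependency}.

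The main work is bounding the maximum degree $D$. If $z\in J_u\cap J_{u'}$, then $d(u,u')<r_n/2$. Thus a neighbour $\{u',v'\}$ of $\{u,v\}$ has an endpoint within $r_n/2$ of $u$ or of $v$, and its other endpoint lies within $2r_{n-1}$ of that point. Both endpoints are therefore in $S$, which is $\eta$-separated.
\begin{itemize}
\item The endpoint near $u$ or $v$ lies in $B_E(u,r_n/2)\cup B_E(v,r_n/2)$. By \cref{lem:packing} there are at most $2A(r_n/(2\eta))^s\Ld(r_n/2)^\alpha\Ld(\eta)^\beta$ choices.
\item The other endpoint lies in a ball of radius $2r_{n-1}$. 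There are at most $A(2r_{n-1}/\eta)^s\Ld(2r_{n-1})^\alpha\Ld(\eta)^\beta$ choices.
\end{itemize}
By \eqref{eq:logwindow}, applied with $R_n=r_n/2$ or $2r_{n-1}$ and $\rho_n=\eta=\lambda r_nt_n^{-\nu}$, each logarithmic factor is at most $C_qt_n^q$, where $C_q$ depends on $\lambda$ and $\nu$. Multiplying gives
\[
D\leq C_2t_n^{2q}\bigl(r_nr_{n-1}/\eta^2\bigr)^s,
\]
with $C_2$ depending only on $M,s,\alpha,\beta,\gamma,N,\lambda,\nu$. The hypothesis $\eta\leq r_n/4$ makes $\rho<R$ in both packing bounds, and the packing lemma also makes $S$ finite inside these balls. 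The hypothesis of the statement is then exactly $4p_0(D+1)\leq1$.

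\Cref{lem:lll} now gives positive probability that no $\mathcal B_{uv}$ occurs. Fixing coefficients in that event, each in $[-A_n,A_n]$, gives the required $H$. The step needing the most care is the dependency graph: one must confirm via \cref{lem:correction}\textup{(i)} that each event depends only on the coefficients indexed by $J_u\cup J_v$, and not on all of $Z$.
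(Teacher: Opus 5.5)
Your proof is correct and follows the paper's argument essentially step for step. It uses the same bad events and the same dependency graph given by overlapping $J_u\cup J_v$. It also uses the same degree count: one endpoint lies in an $r_n/2$-ball around $u$ or $v$, and the other in a ball of radius comparable to $r_{n-1}$, each bounded via \cref{lem:packing} and \eqref{eq:logwindow}.

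One small technicality: $d(u',v')\leq2r_{n-1}$ only places the other endpoint in the closed ball of radius $2r_{n-1}$, whereas \cref{lem:packing} is stated for open balls. Use radius $3r_{n-1}$ instead, as the paper does with the ball centered at $u$ or $v$; this only changes $C_2$.
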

\begin{proof}
Write $r:=r_n$ and $R:=r_{n-1}$. Put
$\mathcal I:=\{\{u,v\}\subseteq S:r/2\leq d(u,v)\leq2R\}$.
If $\mathcal I=\varnothing$, any choice of coefficients suffices.
Assume henceforth that $\mathcal I\ne\varnothing$.
Choose the coefficients as in \cref{def:correction}. For each
$\{u,v\}\in\mathcal I$ let $A_{\{u,v\}}$ be the event
$\{\norm{(G+H)(u)-(G+H)(v)}_\infty<4e\}$.
By \cref{lem:correction},
$\mathbb P(A_{\{u,v\}})\leq(8e/U)^N$. This event depends only on
the coefficients at centers in $J_u\cup J_v$.
Join distinct vertices $\{u,v\}$ and $\{u',v'\}$ in $\mathcal I$ by an edge
if and only if $(J_u\cup J_v)\cap(J_{u'}\cup J_{v'})\ne\varnothing$.
The coefficient family used by each event is independent of the
coefficient family used by all its nonneighbors together. Hence this
graph satisfies \cref{def:dependency}.

Fix a vertex $\{u,v\}$. For an adjacent vertex $\{u',v'\}$,
there are $w\in\{u,v\}$ and $w'\in\{u',v'\}$ such that
$d(w,w')<r/2$. If $w''$ is the other point of $\{u',v'\}$ then
$d(w,w'')\leq d(w,w')+d(w',w'')<r/2+2R<3R$.
Consequently, the number of adjacent vertices is at most
$\sum_{w\in\{u,v\}}\card{S\cap B_E(w,r/2)}\card{S\cap B_E(w,3R)}$.
Each of these sets is $\eta$-separated. Hence
\cref{lem:packing,lem:scales} give
$\card{S\cap B_E(w,r/2)}\leq C_rt_n^q(r/\eta)^s$ and
$\card{S\cap B_E(w,3R)}\leq C_Rt_n^q(R/\eta)^s$.
Here $C_r,C_R$ depend only on
$M,s,\alpha,\beta,\gamma,N,\lambda,\nu$.
Set $C_2:=2C_rC_R$. Then the maximum degree $D_n$ is at most
$C_2t_n^{2q}(r_nr_{n-1}/\eta^2)^s$.
The hypothesis allows us to apply \cref{lem:lll}. Hence there is a choice
of coefficients for which none of the events $A_{\{u,v\}}$ occurs.
The bounds in \cref{lem:correction}\textup{(i)} hold for every choice
of coefficients.
\end{proof}

\Needspace{14\baselineskip}
\subsection{Uniform estimates}
\begin{definition}\label{def:amplitudes}
With $C_*$ as in \cref{lem:scales} put
$K:=\log(8C_*)+q/\varepsilon$ and
$L:=1+16K\sum_{n\in\Z}(2+|n|)^{-(1+\varepsilon)}$. The series
converges because $\varepsilon>0$. For $T\geq2$ and $n\in\Z$ define
$U_n:=r_nt_n^{-(1+2\varepsilon)}$ and $e_n:=r_nt_n^{-(1+a)}$. Also set
$\eta_n:=e_n/(16L)$.
From now on $C_2$ denotes the constant in \cref{lem:separation} with
$\lambda=(16L)^{-1}$ and $\nu=1+a$. The constants $C_*,K,L,C_2$ depend only on
$M,s,\alpha,\beta,\gamma,N$. They are independent of $T$.
\end{definition}

\Needspace{15\baselineskip}
\begin{lemma}\label{lem:uniform}
The following statements hold.
\begin{enumerate}[label=\textup{(\roman*)},leftmargin=2.2em]
\item Every Gumbel correction map $H_n$ defined by \eqref{eq:h} with
$U=U_n$ satisfies
\[
 \sup_{x\in E}\norm{H_n(x)}_\infty
 \leq3Kr_nt_n^{-(1+\varepsilon)},\qquad
 \Lip_\infty(H_n)\leq16Kt_n^{-(1+\varepsilon)}.
\]
For every finite $I\subseteq\Z$, the sum $\sum_{n\in I}H_n$ of such
maps on the same set $E$ is $L$-Lipschitz.
Also, $\eta_n\leq r_n/16$.
\item There exists $T_*\geq2$ with the following property. It depends
only on $M,s,\alpha,\beta,\gamma,N$. For every $T\geq T_*$ and
$n\in\Z$
\begin{equation}\label{eq:LLLparameter}
 4\left(\frac{8e_n}{U_n}\right)^N
 \left[1+C_2t_n^{2q}
       \left(\frac{r_nr_{n-1}}{\eta_n^2}\right)^s\right]\leq1
\end{equation}
and
\[
 6K\sum_{j>n}r_jt_j^{-(1+\varepsilon)}\leq e_n.
\]
\end{enumerate}
\end{lemma}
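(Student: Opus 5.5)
Part (i) is a direct substitution into \cref{lem:correction}\textup{(i)}, and part (ii) is an asymptotic comparison of powers of $t_n$. Throughout I will use $r_{n-1}=r_nt_{n-1}^a\leq 2^ar_nt_n^a$ and $t_n\geq T$.

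\textbf{Part (i).} First I bound $A_n$. Since $m_n=C_*t_n^q$, we have $A_n=\log(8C_*)+q\log t_n$. The elementary inequality $\log t\leq t^{\varepsilon}/\varepsilon$ for $t\geq1$ then gives $A_n\leq(\log(8C_*)+q/\varepsilon)t_n^{\varepsilon}\leq Kt_n^{\varepsilon}$, using $t_n^\varepsilon\geq1$. Next I substitute $U=U_n=r_nt_n^{-(1+2\varepsilon)}$ into \eqref{eq:hbds}. This yields $\sup\norm{H_n}_\infty\leq3A_nU_n\leq3Kr_nt_n^{-(1+\varepsilon)}$ and $\Lip_\infty(H_n)\leq16A_nU_n/r_n\leq16Kt_n^{-(1+\varepsilon)}$. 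For the sum over a finite $I$, I add the Lipschitz bounds and use $t_n=T+|n|\geq2+|n|$, so that the total is at most $16K\sum_n(2+|n|)^{-(1+\varepsilon)}\leq L$. Finally, $\eta_n=r_nt_n^{-(1+a)}/(16L)\leq r_n/16$ holds because $L\geq1$ and $t_n\geq1$.

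\textbf{Part (ii), first inequality.} I compute the ratios. First, $8e_n/U_n=8t_n^{-(a-2\varepsilon)}$. Next, $r_nr_{n-1}/\eta_n^2\leq 2^a(16L)^2t_n^{a+2(1+a)}=C't_n^{2+3a}$. The left side of \eqref{eq:LLLparameter} is therefore at most $4\cdot8^Nt_n^{-(a-2\varepsilon)N}(1+C_2C'^st_n^{2q+s(2+3a)})$. Since $t_n\geq1$, this is at most $C''t_n^{-(a-2\varepsilon)N+2q+s(2+3a)}=C''t_n^{-\delta}\leq C''T^{-\delta}$, using $\delta>0$ from \cref{def:scales}. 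Choosing $T$ so large that $C''T^{-\delta}\leq1$ settles this inequality.

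\textbf{Part (ii), tail sum.} This is the step I expect to need the most care. The right side is $e_n=r_nt_n^{-(1+a)}$, and the exponent $1+a$ exceeds the $1+\varepsilon$ appearing on the left. What saves us is that $r_j$ decays superexponentially in $j$ along the tail.
\begin{itemize}
\item For $j>n$ we have $r_j=r_n\prod_{k=n}^{j-1}t_k^{-a}$.
\item Hence $r_{n+1}=r_nt_n^{-a}$.
\item For larger $j$, each further factor $t_k^{-a}$ is at most $T^{-a}\leq2^{-a}$, so the tail is a geometric-type series.
\end{itemize}
Using the crude bound $t_j^{-(1+\varepsilon)}\leq T^{-(1+\varepsilon)}$ is not enough, because the leading term must be compared with $t_n^{-(1+a)}$ rather than with a constant. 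Instead I use $t_j\geq t_n/2^{j-n}$ from $t_n/t_{n+1}\leq2$. Then
\[
\sum_{j>n}r_jt_j^{-(1+\varepsilon)}\leq r_nt_n^{-a}\sum_{k\geq1}T^{-a(k-1)}\,2^{k(1+\varepsilon)}t_n^{-(1+\varepsilon)}.
\]
For $T$ large enough that $T^{-a}2^{1+\varepsilon}\leq1/2$, the series is bounded by a constant. This leaves the bound $C\,r_nt_n^{-(1+a+\varepsilon)}=C\,e_nt_n^{-\varepsilon}\leq Ce_nT^{-\varepsilon}$. Enlarging $T$ once more makes $6KC\,T^{-\varepsilon}\leq1$.

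Finally, $T_*$ is taken as the maximum of these finitely many thresholds. None of them depends on anything beyond $M,s,\alpha,\beta,\gamma,N$.
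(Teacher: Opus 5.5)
Your proposal is correct and follows the paper's proof essentially step for step. Both use the same bound $A_n\le Kt_n^{\varepsilon}$, and both do the same exponent bookkeeping that reduces the local-lemma inequality to $C''T^{-\delta}\le1$. The tail estimate rests on the same two ingredients: geometric decay of the terms once $2^{1+\varepsilon}T^{-a}\le 1/2$, and the extra factor $t_n^{-\varepsilon}$ when the leading term is compared with $e_n$. The only cosmetic difference is that you unroll the product $r_j=r_n\prod_{k=n}^{j-1}t_k^{-a}$ and use $t_j\ge 2^{-(j-n)}t_n$, whereas the paper bounds the consecutive ratios $V_{j+1}/V_j$ and $V_{n+1}/e_n$ directly.
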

\begin{proof}
\textup{(i)} For $t\geq1$, we have
$\log t=\int_1^t u^{-1}\,du\leq\int_1^t u^{\varepsilon-1}\,du
=(t^\varepsilon-1)/\varepsilon\leq t^\varepsilon/\varepsilon$.
Hence $A_n=\log(8C_*)+q\log t_n\leq Kt_n^\varepsilon$.
Together with the bounds in \cref{lem:correction}\textup{(i)}, this gives
$\sup_E\norm{H_n}_\infty\leq3Kt_n^\varepsilon U_n
=3Kr_nt_n^{-(1+\varepsilon)}$ and
$\Lip_\infty(H_n)\leq16Kt_n^\varepsilon U_n/r_n
=16Kt_n^{-(1+\varepsilon)}$.
Since $t_n\geq2+|n|$, the definition of $L$ gives
$\Lip_\infty(\sum_{n\in I}H_n)<L$.
The inequality $\eta_n\leq r_n/16$ follows directly from $L\geq1$
and the definition of $\eta_n$.

\textup{(ii)} Since $r_{n-1}/r_n=t_{n-1}^a\leq2^at_n^a$, we have
$r_nr_{n-1}/\eta_n^2\leq(16L)^2 2^at_n^{2+3a}$.
Also, $e_n/U_n=t_n^{-(a-2\varepsilon)}$.
Put $b:=2q+s(2+3a)$. Since $(a-2\varepsilon)N=b+\delta$,
the left-hand side of \eqref{eq:LLLparameter} is at most
\[
 4\cdot8^N t_n^{-(b+\delta)}
 \left[1+C_2(16L)^{2s}2^{as}t_n^b\right]
 \leq C_3t_n^{-\delta}\leq C_3T^{-\delta},
\]
where $C_3:=4\cdot8^N[1+C_2(16L)^{2s}2^{as}]$.

For the tail estimate, put $V_j:=r_jt_j^{-(1+\varepsilon)}$.
For all $j,n\in\Z$,
\[
 \frac{V_{j+1}}{V_j}
 =t_j^{-a}\left(\frac{t_j}{t_{j+1}}\right)^{1+\varepsilon}
 \leq2^{1+\varepsilon}T^{-a},\qquad
 \frac{V_{n+1}}{e_n}
 =t_n^{-\varepsilon}\left(\frac{t_n}{t_{n+1}}\right)^{1+\varepsilon}
 \leq2^{1+\varepsilon}T^{-\varepsilon}.
\]
Choose $T_*\geq2$ so large that
$C_3T_*^{-\delta}\leq1$, $2^{1+\varepsilon}T_*^{-a}\leq1/2$, and
$12K\,2^{1+\varepsilon}T_*^{-\varepsilon}\leq1$.
For $T\geq T_*$ we have $V_{j+1}\leq V_j/2$. Hence
$6K\sum_{j>n}V_j\leq12KV_{n+1}\leq e_n$.
The argument also applies to negative indices.
The constants are chosen in the order $C_*$ then $K,L$ then $C_2,C_3$
and finally $T_*$. None depends on the finite subspace or its cardinality.
\end{proof}

\Needspace{11\baselineskip}
\subsection{Finite subspaces}
\begin{lemma}\label{lem:scalegauge}
Fix $T\geq2$ and put $B:=T+1+(a\log T)^{-1}$.
If $r_n\leq t<r_{n-1}$ then
\[
 t_n\leq B\Ld(t),\qquad
 e_n\geq2^{-a}B^{-\gamma}\frac{t}{\Ld(t)^\gamma}.
\]
\end{lemma}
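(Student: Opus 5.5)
The plan has two parts. First I will bound $t_n$ from above by $|\log t|$, using the explicit product formula for $r_n$ from the proof of \cref{lem:scales}. Then I will turn the bound $t<r_{n-1}$ into a lower bound on $r_n$. The key observation is that $\gamma=1+2a$ by \cref{def:scales}, so $t_n^{-(1+a)}\cdot t_n^{-a}=t_n^{-\gamma}$.

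\emph{First inequality.} I split according to the sign of $n$.
\begin{itemize}[leftmargin=2.2em]
\item If $n\geq1$, then $t<r_{n-1}=\prod_{j=0}^{n-2}(T+j)^{-a}\leq T^{-a(n-1)}$. This gives $|\log t|\geq-\log t\geq a(n-1)\log T$, so $n-1\leq|\log t|/(a\log T)$. Hence
\[
 t_n=T+1+(n-1)\leq T+1+\frac{|\log t|}{a\log T}\leq B\Ld(t),
\]
where the last step uses $B\geq T+1$, $B\geq(a\log T)^{-1}$, $\Ld(t)\geq1$ and $\Ld(t)\geq|\log t|$.
\item If $n\leq-1$, then $t\geq r_n=\prod_{j=1}^{|n|}(T+j)^a\geq T^{a|n|}$. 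So $\log t\geq a|n|\log T$ and $t_n\leq T+|\log t|/(a\log T)\leq B\Ld(t)$ in the same way.
\item If $n=0$, then $t_0=T\leq B\leq B\Ld(t)$ trivially.
\end{itemize}
Here $T\geq2$ ensures $\log T>0$.

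\emph{Second inequality.} From $t<r_{n-1}=r_nt_{n-1}^a$ and $t_{n-1}\leq2t_n$ (\cref{lem:scales}) I get $r_n>2^{-a}t\,t_n^{-a}$. Therefore
\[
 e_n=r_nt_n^{-(1+a)}\geq2^{-a}t\,t_n^{-(1+2a)}=2^{-a}t\,t_n^{-\gamma}.
\]
Inserting $t_n\leq B\Ld(t)$ from the first part gives $e_n\geq2^{-a}B^{-\gamma}t/\Ld(t)^\gamma$.

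I see no real obstacle. The only points needing care are the index bookkeeping in the product formula (the product for $r_{n-1}$ runs up to $j=n-2$ when $n\geq1$) and the edge case $n=1$. In that case the bound reads $t_1=T+1\leq B$, which holds by the definition of $B$.
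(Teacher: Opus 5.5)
Your proposal is correct and follows the paper's proof essentially step for step. The paper also bounds $|\log t|$ from below via the product formula for $r_{n-1}$ (when $n\geq1$) or $r_n$ (when $n\leq-1$) to obtain $|n|\leq1+|\log t|/(a\log T)$, and then uses $r_{n-1}\leq2^ar_nt_n^a$ together with $\gamma=1+2a$; the only difference is that you treat $n=0$ as a separate trivial case, while the paper absorbs it into the uniform bound on $|n|$.
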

\begin{proof}
If $n\geq1$ then $t<r_{n-1}\leq1$. Hence
$|\log t|\geq a\sum_{j=0}^{n-2}\log(T+j)\geq a(n-1)\log T$.
The sum is empty when $n=1$.
If $n\leq-1$ then $t\geq r_n>1$. Hence
$|\log t|\geq a\sum_{j=n}^{-1}\log(T+|j|)\geq a|n|\log T$.
This also covers the case $n=0$ and gives
$|n|\leq1+|\log t|/(a\log T)$. Hence
$t_n=T+|n|\leq B\Ld(t)$.
Moreover, $t<r_{n-1}\leq2^ar_nt_n^a$ and $1+2a=\gamma$ give
$e_n=r_nt_n^{-(1+a)}\geq2^{-a}t\,t_n^{-\gamma}
\geq2^{-a}B^{-\gamma}t\,\Ld(t)^{-\gamma}$.
\end{proof}

\Needspace{9\baselineskip}
\begin{proposition}\label{prop:finite}
There are constants $c_0,L>0$ with the following property. They depend
only on $M,s,\alpha,\beta,\gamma,N$. For every finite subspace
$E\subseteq X$ there is a map $F_E:E\to\R^N$ satisfying
\begin{equation}\label{eq:finite}
 c_0\frac{d(x,y)}{\Ld(d(x,y))^\gamma}
 \leq\norm{F_E(x)-F_E(y)}_\infty
 \leq Ld(x,y)\qquad(x,y\in E,\ x\ne y).
\end{equation}
These constants are independent of $\card E$ and $\diam E$. They are
also independent of the smallest nonzero distance in $E$.
\end{proposition}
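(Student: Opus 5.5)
We fix $T:=T_*$ from \cref{lem:uniform}(ii) and build $F_E$ as a finite sum $F_E:=\sum_{n\in I}H_n$ of Gumbel correction maps. The scales range over a finite window $I=\{n_-,\ldots,n_+\}\subseteq\Z$. Since $E$ is finite, we choose $n_+$ with $r_{n_+}$ below half the smallest nonzero distance in $E$. We choose $n_-$ with $r_{n_-}>2\diam E$. Then every distance $t=d(x,y)>0$ lies in some $[r_n,r_{n-1})$ with $n\in I$, by \cref{lem:scales}. The upper bound is immediate. By \cref{lem:uniform}(i), every finite sum of such maps is $L$-Lipschitz in the maximum norm, with $L$ as in \cref{def:amplitudes}. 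This $L$ does not depend on $E$ or $I$.

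We choose the coefficients of $H_n$ inductively from the coarsest scale to the finest, i.e.\ for $n=n_-,n_-+1,\ldots,n_+$. At step $n$, let $G_n:=\sum_{j\in I,\,j<n}H_j$ be the map already built. Let $S_n\subseteq E$ be a maximal $\eta_n$-separated set, which is an $\eta_n$-net by \cref{lem:nets}. Note that $\eta_n\leq r_n/16\leq r_n/4$. We apply \cref{lem:separation} with $G=G_n$, $\lambda=(16L)^{-1}$, $\nu=1+a$, $U=U_n$ and $e=e_n$. Its hypothesis is exactly \eqref{eq:LLLparameter}. This yields $H_n$ with $\norm{(G_n+H_n)(u)-(G_n+H_n)(v)}_\infty\geq4e_n$ for all $u,v\in S_n$ with $r_n/2\leq d(u,v)\leq2r_{n-1}$.

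For the lower bound, let $x\neq y$ with $r_n\leq d(x,y)<r_{n-1}$, and pick $u,v\in S_n$ with $d(x,u),d(y,v)<\eta_n$. Then $d(u,v)\geq r_n-2\eta_n\geq r_n/2$ and $d(u,v)<r_{n-1}+r_n/8\leq2r_{n-1}$. Hence $\norm{(G_n+H_n)(u)-(G_n+H_n)(v)}_\infty\geq4e_n$. We pass from $u,v$ to $x,y$ using the Lipschitz bound $L$ for $G_n+H_n$. This loses at most $2L\eta_n=e_n/8$, leaving at least $3e_n$. The finer scales $j>n$ each change differences by at most $2\sup\norm{H_j}_\infty\leq6Kr_jt_j^{-(1+\varepsilon)}$. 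Their total is at most $e_n$ by the tail estimate in \cref{lem:uniform}(ii). So $\norm{F_E(x)-F_E(y)}_\infty\geq2e_n$. Finally \cref{lem:scalegauge} gives $2e_n\geq c_0\,d(x,y)/\Ld(d(x,y))^\gamma$ with $c_0:=2^{1-a}B^{-\gamma}$, where $B$ depends only on $T_*$ and $a$.

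The main obstacle is making the order of quantifiers consistent, so that the constants do not depend on $E$. The separation lemma must be applied conditionally on the already fixed coarser map $G_n$. This is why \eqref{eq:antic} must hold for every fixed $G$. The finer maps must be treated as bounded perturbations, controlled by sup-norm bounds rather than by separation. Truncating to the finite window $I$ is harmless, because the Lipschitz and tail bounds hold for every finite $I$. The remaining checks are the inequality $r_n/8\leq r_{n-1}$ used above and the arithmetic of the net approximation with $\eta_n\leq e_n/(16L)$. Neither appears problematic.
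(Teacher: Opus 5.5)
Your proposal is correct and follows essentially the same route as the paper. Both choose the Gumbel corrections successively from coarse to fine via \cref{lem:separation} applied to the fixed partial sum $G_n$, lose $2L\eta_n=e_n/8$ in the net approximation, absorb the finer scales with the tail bound of \cref{lem:uniform}, and finish with \cref{lem:scalegauge} and $c_0=2^{1-a}B^{-\gamma}$. The only difference is cosmetic: your scale window is fixed by $\diam E$ and the minimal distance rather than by the set $I_E$ of realized scales, which is harmless because the Lipschitz and tail bounds hold for every finite index set; you should also note that $\card E\leq1$ is vacuous.
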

\begin{proof}
Fix the constants $L$ and $T=T_*$ chosen above.
Take the zero map when $E$ is empty or a singleton. Otherwise
\[
 I_E:=\{n\in\Z:\text{there exist }x,y\in E\text{ with }x\ne y,\
                         r_n\leq d(x,y)<r_{n-1}\}
\]
is a nonempty finite set. Put $n_-:=\min I_E$ and $n_+:=\max I_E$. Choose maps $H_n$ successively for every index
$n_-,n_-+1,\ldots,n_+$.

Once $H_j$ has been chosen for $n_- \leq j<n$, put
$G_n:=\sum_{j=n_-}^{n-1}H_j$. When $n=n_-$ this is the empty sum and $G_{n_-}=0$. Choose a maximal $r_n/8$-separated set $Z_n$ and a
maximal $\eta_n$-separated set $S_n$ in $E$.
By \cref{lem:uniform}, we may apply \cref{lem:separation} to the fixed
map $G_n$ with $U=U_n$ and $e=e_n$. We also take $\eta=\eta_n$.
We can therefore choose $H_n$ satisfying the bounds in
\cref{lem:uniform}\textup{(i)} and
$\norm{(G_n+H_n)(u)-(G_n+H_n)(v)}_\infty\geq4e_n$ whenever
$u,v\in S_n$ and $r_n/2\leq d(u,v)\leq2r_{n-1}$.
Since the probability estimate in \cref{lem:correction}\textup{(ii)}
holds for every fixed $G_n$, the maps $H_n$ may
be chosen successively while retaining all previous choices.
Set $F_E:=\sum_{n=n_-}^{n_+}H_n$.
By \cref{lem:uniform}, $\Lip_\infty(F_E)\leq L$.

Fix distinct $x,y\in E$. Let $n$ be the index such that
$r_n\leq d(x,y)<r_{n-1}$. Since $S_n$ is an $\eta_n$-net there are $u,v\in S_n$ such that
$d(x,u)<\eta_n$ and $d(y,v)<\eta_n$. The bound $\eta_n\leq r_n/16$
gives
$d(u,v)>r_n-2\eta_n\geq7r_n/8>r_n/2$ and
$d(u,v)<r_{n-1}+2\eta_n<2r_{n-1}$.
Put $Q_n:=G_n+H_n$. This partial sum is also $L$-Lipschitz. Hence
$\norm{Q_n(x)-Q_n(y)}_\infty\geq4e_n-e_n/8$.
The first estimate in \cref{lem:uniform} and the tail bound in
part~\textup{(ii)} give
$\norm{\sum_{j=n+1}^{n_+}(H_j(x)-H_j(y))}_\infty\leq e_n$.
The triangle inequality yields
$\norm{F_E(x)-F_E(y)}_\infty\geq(4-1/8-1)e_n=(23/8)e_n\geq2e_n$.
Apply \cref{lem:scalegauge} and take $c_0:=2^{1-a}B^{-\gamma}$. This
gives \eqref{eq:finite}. The constants have the asserted dependence
because $T=T_*$ and $B$ depend only on $M,s,\alpha,\beta,\gamma,N$.
\end{proof}

\subsection{General metric spaces}

\begin{proof}[Proof of \cref{thm:quant}]
Fix $\gamma,N$ satisfying \eqref{eq:quantcond}.
By \cref{prop:finite} there are constants $c_0,L$ that work for all
finite subspaces of $X$. If $X$ is finite, apply that proposition and
\cref{lem:euclidean}\textup{(i)}.
Assume henceforth that $X$ is infinite.

By \cref{prop:basic}, there is a countable dense set
$D=\{x_0=o,x_1,\ldots\}$ of distinct points.
Put $E_m:=\{x_0,\ldots,x_m\}$. Translate the map
$F_m:E_m\to\R^N$ from \cref{prop:finite} so that $F_m(o)=0$.
For fixed $j$ and $m\geq j$ the inequality
$\norm{F_m(x_j)}_\infty\leq Ld(x_j,o)$ places $F_m(x_j)$ in the
compact set $[-Ld(x_j,o),Ld(x_j,o)]^N$.
Successively taking convergent subsequences for each $j$ and then a
diagonal subsequence $(F_{m_k})$, we obtain convergence at every point
of $D$. Define $f(x_j):=\lim_{k\to\infty}F_{m_k}(x_j)$.
Any fixed $x_i,x_j$ belong to $E_{m_k}$ for all sufficiently large $k$.
Continuity of the norm therefore shows that both inequalities in
\eqref{eq:finite} also hold for $f$.

For each $x\in X$ choose a sequence $(y_k)\subset D$ converging
to $x$. Since
$\norm{f(y_k)-f(y_\ell)}_\infty\leq
L(d(y_k,x)+d(y_\ell,x))\to0$, the sequence $f(y_k)$ converges by
completeness of $\R^N$.
If $(z_k)\subset D$ is another sequence converging to $x$ then
$\norm{f(y_k)-f(z_k)}_\infty\leq L(d(y_k,x)+d(z_k,x))\to0$.
Thus $F(x):=\lim_{k\to\infty}f(y_k)$ is independent of the
approximating sequence.

For distinct $x,y\in X$ choose sequences in $D$ with $y_k\to x$
and $z_k\to y$. The convergence $d(y_k,z_k)\to d(x,y)>0$ implies
$y_k\ne z_k$ for all sufficiently large $k$.
Use continuity of the distance and the maximum norm. Also use continuity
of $t\mapsto t/\Ld(t)^\gamma$ for $t>0$. Passing to the limit in both
inequalities for $f(y_k),f(z_k)$. This gives \eqref{eq:finite} for $F$.
Finally \cref{lem:euclidean}\textup{(i)} yields \eqref{eq:abl} with
$c=c_0$ and $C=\sqrt N L$.
These constants depend only on $M,s,\alpha,\beta,\gamma,N$.
\end{proof}

\begin{proof}[Proof of \cref{cor:doubling}]
By \cref{prop:basic}, a $K$-doubling space satisfies \eqref{eq:ah}
with $M=K$ and $s=\log_2K$. Here $\alpha=\beta=0$.
Substituting $h=0$ into \eqref{eq:quantcond} gives the asserted
dimension condition.
\end{proof}

\begin{proof}[Proof of \cref{thm:char}]
For \textup{(i)} $\Rightarrow$ \textup{(ii)} choose
$M,s,\alpha,\beta$ for which $X$ satisfies \eqref{eq:ah}. Then take
$\gamma>1$ and a positive integer $N$ satisfying \eqref{eq:quantcond}.
Apply \cref{thm:quant}.

The implication \textup{(ii)} $\Rightarrow$ \textup{(i)} is known. We
include a proof here for completeness.
Suppose that $F:X\to\R^N$ satisfies \eqref{eq:abl}.
Fix $x\in X$ and $0<\rho<R$. Let $S\subseteq B_X(x,R)$ be a
finite $\rho$-separated set.
By \cref{lem:gauge} the set $F(S)$ is $\sigma$-separated with
$\sigma:=c\kappa_\gamma\rho/\Ld(\rho)^\gamma$. Also
$F(S)\subseteq B_{\R^N}(F(x),CR)$. The map $F$ is injective.
The Euclidean packing estimate gives
\[
 \card S\leq\left(1+\frac{2CR}{\sigma}\right)^N
 \leq C_4\left(\frac R\rho\right)^N\Ld(\rho)^{\gamma N},
 \qquad C_4:=\left(1+\frac{2C}{c\kappa_\gamma}\right)^N.
\]
For the last inequality we used $1+bA\leq(1+b)A$ with
$A:=(R/\rho)\Ld(\rho)^\gamma\geq1$ and
$b:=2C/(c\kappa_\gamma)>0$.
The cardinalities of all finite $\rho$-separated sets have a common
upper bound. Hence \cref{lem:nets} gives a finite maximal
$\rho$-separated subset of $B_X(x,R)$. This set is a $\rho$-net.
Using its points as centers of covering balls gives \eqref{eq:ah}
with $(M,s,\alpha,\beta)=(C_4,N,0,\gamma N)$.
\end{proof}

\Needspace{7\baselineskip}
\begin{corollary}\label{cor:diff}
Every almost homogeneous metric space $X$ admits an almost bi-Lipschitz
embedding $F:X\to\R^N$ whose image has a $5^N$-doubling difference set
$F(X)-F(X):=\{F(x)-F(y):x,y\in X\}$.
\end{corollary}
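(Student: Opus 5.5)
The plan is to combine \cref{thm:char} with \cref{lem:euclidean}\textup{(iii)}. Let $X$ be almost homogeneous. By \cref{thm:char} (through \cref{thm:quant}) there are $N\in\N$, $\gamma>1$ and a $\gamma$-almost bi-Lipschitz map $F:X\to\R^N$. The difference set $F(X)-F(X)$ is a subset of $\R^N$. With the induced Euclidean metric, \cref{lem:euclidean}\textup{(iii)} shows that it is $5^N$-doubling. This is the whole argument. The doubling constant $5^N$ does not depend on $F$ beyond the target dimension $N$, and no property of $F$ other than its codomain is used for the doubling claim.

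A few details need checking. First, in \cref{def:doubling} the centers of the covering balls must lie in the space itself, here $D:=F(X)-F(X)$. This is exactly what \cref{lem:euclidean}\textup{(iii)} provides: for $a\in D$ and $R>0$, the lemma covers $B_D(a,R)$ by at most $5^N$ balls of radius $R/2$ centered at points of a maximal $R/2$-separated subset of $B_D(a,R)\subseteq D$. Second, the case $X=\varnothing$ is trivial, since then the image and the difference set are empty and every ball condition holds vacuously. Third, if $X$ is finite, \cref{thm:quant} still applies, or one may use \cref{prop:finite} together with \cref{lem:euclidean}\textup{(i)}.

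There is no real obstacle; the statement is a direct corollary. The only point to flag is interpretive. The question of Olson and Robinson asks for the difference set to be homogeneous, i.e.\ doubling. By \cref{prop:basic}\textup{(ii)} and \textup{(i)}, doubling is equivalent to $(M,s,0,0)$-almost homogeneity. So I would add a remark that, by \cref{prop:basic}\textup{(i)}, $F(X)-F(X)$ is $(5^N,N\log_25,0,0)$-almost homogeneous. In particular it is almost homogeneous in the sense of \cref{def:ah}, which is the form in which the cited questions are posed.
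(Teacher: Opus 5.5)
Your proposal is correct and follows the paper's own proof: take $F$ from \cref{thm:quant} (via \cref{thm:char}) and apply \cref{lem:euclidean}\textup{(iii)} to $F(X)-F(X)\subseteq\R^N$. Your side remarks on empty and finite $X$, and on the resulting $(5^N,N\log_25,0,0)$-almost homogeneity via \cref{prop:basic}\textup{(i)}, are accurate additions that the paper leaves implicit.
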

\begin{proof}
Take $F$ from \cref{thm:quant} and apply
\cref{lem:euclidean}\textup{(iii)} to $F(X)-F(X)\subseteq\R^N$.
\end{proof}

This corollary answers the image-difference-set questions in
\cite[Section~9, Question~(3)]{OlsonRobinson2010} and
\cite[Question~4.2]{Margaris2020v1} affirmatively with a doubling
image difference set. For the latter question the embedding is allowed
to be nonlinear. The question number refers to the first arXiv version
\cite{Margaris2020v1}.

\section*{Acknowledgments}
The author used the GPT-6.0 model to prepare this manuscript. The author reviewed and revised the mathematical content and takes full responsibility for the final manuscript.

\bigskip
\noindent\textsc{Mathematical Institute, Tohoku University}\\
Sendai 980-8578, Japan\\
\textit{Email address:} \texttt{fukano.ryo.t5@dc.tohoku.ac.jp}

\end{document}